\numberwithin{equation}{section}
\theoremstyle{plain}
\newtheorem{theorem}{Theorem}[section]
\newtheorem{lemma}[theorem]{Lemma}
\newtheorem{proposition}[theorem]{Proposition}
\newtheorem{corollary}[theorem]{Corollary}
\newtheorem{conjecture}[theorem]{Conjecture}
\newtheorem{cor}[equation]{Corollary}
\newtheorem{prop}[equation]{Proposition}
\newtheorem*{claim*}{Claim}
\theoremstyle{definition}
\newtheorem{remark}[theorem]{Remark}
\DeclareMathOperator{\Isom}{{\mathrm Isom}}
\DeclareMathOperator{\stab}{{\mathrm stab}}
\DeclareMathOperator{\End}{{\mathrm End}}
\DeclareMathOperator{\Aut}{{\mathrm Aut}}
\DeclareMathOperator{\Hom}{{\mathrm Hom}}
\DeclareMathOperator{\Ad}{{\mathrm Ad}}
\DeclareMathOperator{\ad}{{\mathrm ad}}
\DeclareMathOperator{\vol}{{\mathrm vol}}
\DeclareMathOperator{\SL}{{\mathrm SL}}
\def\g{\mathfrak g}
\DeclareMathOperator{\SO}{{\mathrm SO}}
\def\Ga{\Gamma}
\def\ga{\gamma}
\def\H{\mathbb H}
\def\R{\mathbb R}
\def\HH{\mathcal H}
\title{Cusps, Kleinian groups and Eisenstein series}
\author{Beibei Liu}
\address{Department of Mathematics, The Ohio State University, Columbus, OH 43210, USA}
\email{bbliumath@gmail.com}
\author{Shi Wang}
\address{Institute of Mathematical Sciences, ShanghaiTech University, Shanghai, China}
\email{shiwang.math@gmail.com}
\subjclass[2010]{}
\providecommand{\keywords}[1]{\textbf{\textit{Index terms---}} #1}
\date{}
\begin{document}

\begin{abstract}
We study the Eisenstein series associated to the full rank cusps in a complete hyperbolic manifold. We show that given a Kleinian group $\Ga<\Isom^+(\H^{n+1})$, each full rank cusp corresponds to a cohomology class in $H^{n}(\Ga, V)$ where $V$ is either the trivial coefficient or the adjoint representation. Moreover, by computing the intertwining operator, we show that different cusps give rise to linearly independent classes.

\end{abstract}

\keywords{Kleinian groups, Eisenstein series, cusps, adjoint representation}

\maketitle

\section{Introduction}\label{sec:introduction} We say $\Ga$ is a Kleinian group if it is a discrete isometry subgroup of $G=\Isom^+(\H^{n})$, the orientation preserving isometry group of $\H^{n}$. One of the main themes in hyperbolic geometry is to study the number of cusps in the associated quotient manifold $\Ga\backslash \H^{n}$. When $\Ga$ contains parabolic elements, every cusp corresponds to a $\Ga$-conjugacy class of maximal parabolic subgroups in $\Ga$. In dimension $3$, the celebrated work of Sullivan \cite{Sullivan} shows that finitely generated Kleinian groups always have finitely many cusps, and the number of cusps is bounded by $5N-4$ where $N$ denotes the number of generators in $\Ga$. (See also the work of Kra \cite{Kra}). However, starting in dimension $4$, cusp finiteness theorem fails. The first example was due to Kapovich \cite{Kapovich1}, where he constructed a finitely generated free Kleinian group  $\Ga<\Isom^+(\H^{4})$ that has infinitely many rank one cusps. In a recent paper \cite{IMM}, Italiano, Martelli and Migliorini constructed a finitely generated Kleinian group $\Ga<\Isom^+(\H^{n})$ which has infinitely many full rank cusps in dimensions $5\leq n \leq 8$. Moreover, $\Ga$ can be made finitely presented in dimension $7$ and $8$. On the other hand, it is proved in  \cite{LW20} that the number of cusps is bounded by the first Betti number provided the critical exponent is smaller than $1$.

One general approach to show a cusp finiteness theorem is to first associate each maximal parabolic subgroup $\Ga_i<\Ga$ with a cohomology class $\alpha_i\in H^*(\Ga, V)$ after choosing a suitable coefficient module $V$, then to show the corresponding cohomology classes for different cusps are linearly independent. Finally, if we know the overall dimension of $H^*(\Ga, V)$ is finite, then the number of cusps must also be finite. For example, in Sullivan's proof he chose $V$ to be the polynomial space of degree at most 4, and constructed a cross homomorphism from $\Ga$ to $V$ (thus representing a class in $H^1(\Ga,V)$) via the Borel series associated to each cusp. Then he showed these representing classes are linearly independent, hence, the number of cusps is bounded in terms of  the first Betti number (number of generators of $\Ga$). In analogous to the Borel series, we can use the Eisenstein series to associate a cusp with a cohomology class in $H^{\ast}(\Ga, V)$. Much of work was done by Harder \cite{Harder, Ha1}, Schwermer \cite{Schwermer, Schwermerbook} and many others when $\Ga$ is an arithmetic lattice in a semisimple Lie group. Using the Borel-Serre compactification, the Eisenstein cohomology naturally arises from the cohomology of the boundary, which has deep relations to the arithmetic aspects of $\Ga$ such as the special values of $L$-functions.

The main purpose of this note is to extend the Eisenstein construction to the context of general Kleinian groups with full rank cusps. We make use of the Poincar\'e series to obtain absolute convergence of the Eisenstein series. Thus, each full rank cusp corresponds to a cohomology class on the quotient manifold. In order to distinguish these cohomology classes arising from different cusps, we compute the intertwining operators and use them to show that these cohomology classes are indeed linearly independent. The computation of the intertwining operators is very difficult in general. We follow the general approach of Harish-Chandra \cite{HarishChandra} but instead use the Lie group decompositions over the reals including the Bruhat and Langlands decompositions. In particular, our proof does not rely on the finite volume property or arithmeticity of $\Ga$. In the case of trivial coefficient, we prove,

\begin{theorem}\label{thm:trivial}
	Let $\Ga<\Isom^+(\mathbb H^{n+1})$ be a torsion free discrete subgroup. If  the critical exponent $\delta(\Ga)<n$ or $\Ga$ is of convergence type, then for any parabolic subgroup $\Ga_i<\Ga$ of rank $n$, and any generating cohomology class $\alpha_i\in H^{n}(\Ga_i,\R)\cong \R$, there is a harmonic form $E(\alpha_i)$ on $\Ga\backslash \H^{n+1}$ constructed via the Eisenstein series such that
	\begin{enumerate}
		\item The restriction homomorphism $H^{n}(\Ga,\R)\rightarrow H^n(\Ga_i, \R)$ sends $[E(\alpha_i)]$ to $\alpha_i$.
		\item If $\Ga_i, \Ga_j$ are not $\Ga$-conjugate, then the restriction homomorphism $H^{n}(\Ga,\R)\rightarrow H^n(\Ga_j, \R)$ sends $[E(\alpha_i)]$ to $0$.
	\end{enumerate} 
\end{theorem}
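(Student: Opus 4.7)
The plan is to work in the upper half-space model of $\H^{n+1}$ with $\Ga_i$ fixing $\infty$, construct an explicit $\Ga_i$-invariant harmonic representative of $\al_i$, and average it over $\Ga_i$-cosets of $\Ga$. With coordinates $(x_1,\dots,x_n,y)$, $y>0$, and metric $y^{-2}(dx^2+dy^2)$, take $\omega_0 = c\,dx_1\wedge\cdots\wedge dx_n$ with $c$ chosen so that $[\omega_0] = \al_i$ in $H^n(\Ga_i,\R)$. In the orthonormal coframe $e^k = y^{-1}dx_k$, $e^{n+1}=y^{-1}dy$ one has $\omega_0 = cy^n e^1\wedge\cdots\wedge e^n$, so $*\omega_0 = cy^{n-1}dy$ and hence $d\omega_0 = d*\omega_0 = 0$: the form $\omega_0$ is harmonic on $\H^{n+1}$ and manifestly invariant under the orientation-preserving crystallographic group $\Ga_i$.

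Set
\begin{equation*}
E(\al_i) \;=\; \sum_{\gamma\in\Ga_i\backslash\Ga}\gamma^*\omega_0.
\end{equation*}
Well-definedness and $\Ga$-invariance are formal. Since $\gamma$ is an isometry, $|\gamma^*\omega_0|_g(p) = c\,y(\gamma p)^n$, comparable up to a bounded factor to $e^{-n\,d(p,\gamma p_0)}$ for any basepoint $p_0$ on $\{y=1\}$. The resulting dominating series is a Poincar\'e series at exponent $n$, and hence converges absolutely and locally uniformly on $\H^{n+1}$ away from the cusp orbit under either hypothesis $\delta<n$ or $\Ga$ of convergence type. Coupled with standard elliptic estimates on the form Laplacian, termwise differentiation is permitted, so $E(\al_i)$ is a smooth $\Ga$-invariant harmonic $n$-form.

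For conclusion (1), write $E(\al_i) = \omega_0 + R_i$ with $R_i = \sum_{\gamma\in\Ga_i\backslash\Ga,\,\gamma\notin\Ga_i}\gamma^*\omega_0$, a closed $\Ga_i$-invariant form descending to $\Ga_i\backslash\{y>y_0\}$. Since $H^n(\Ga_i,\R)\cong\R$ is detected by integration over a horospherical cross-section $T_i(y_0) = \{y=y_0\}/\Ga_i$, and closedness makes that period independent of $y_0$, it suffices to prove $\int_{T_i(y_0)}R_i \to 0$ as $y_0\to\infty$. Absolute convergence allows one to swap sum and integral and to change variables termwise: each term becomes $\int_{\gamma F}\omega_0$, where $F\subset\{y=y_0\}$ is a fundamental domain for $\Ga_i$ and $\gamma F$ is a compact subset of a horosphere based at the finite point $\gamma(\infty)\in\R^n$, whose Euclidean diameter shrinks to zero with $y_0$. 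Dominated convergence then yields (1).

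For conclusion (2), conjugate so that $\Ga_j$ fixes $\infty$ and re-expand $E(\al_i) = \sum_{\gamma\in\Ga_i\backslash\Ga}\gamma^*\omega_0$ in the new chart. Non-conjugacy of $\Ga_j$ and $\Ga_i$ means no coset representative now fixes $\infty$, so the identity contribution of the previous paragraph is absent and exactly the same shrinking-horosphere estimate yields $\int_{T_j(y_0)}E(\al_i)\to 0$, forcing the restricted class to vanish. I expect the principal obstacle throughout to be the uniform interchange of limit, integral, and infinite sum at the critical exponent $s = n$: this is where the Iwasawa decomposition $G = NAK$ over $\R$ enters, used to re-parametrise each coset by horospherical coordinates and to rewrite each Eisenstein period as an explicit intertwining integral whose absolute convergence matches exactly the Poincar\'e-series hypothesis.
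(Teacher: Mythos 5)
Your argument is correct in outline, but it reaches the conclusion by a genuinely different route from the paper. The convergence step is the same in both: you dominate $\sum_{\ga}|\ga^*\omega_0|$ by a Poincar\'e series at exponent $n$, which is exactly Proposition \ref{prop:convergence} (note that your pointwise comparison $y(\ga p)^n\asymp e^{-n\,d(p,\ga p_0)}$ is a priori only one-sided; the two-sided bound requires choosing the coset representative so that the unipotent part of $\ga$ displaces the basepoint a bounded amount, which is where the full-rank hypothesis enters, as in the proof of Proposition \ref{prop:convergence}). Where you diverge is in identifying the restricted cohomology class. The paper computes the constant term exactly: it unfolds the sum over $\Ga_\xi\backslash\Ga$ using the geometric Bruhat decomposition (Lemma \ref{lem:Bruhat}) and double cosets, obtaining $E^{\xi}(\phi_{\xi'},g,s)=\delta\,\phi_{\xi',s}+c_s(\phi_{\xi'})_{2n-s}$ (Theorem \ref{thm:intertwining}), and then shows the second term restricts to the zero class by a Lie-algebra-cohomology argument forced by closedness (Proposition \ref{prop:cohomology}). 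You instead exploit that $H^n(\Ga_i,\R)\cong\R$ is detected by a single scalar period over a cross-section $T_i(y_0)$, note that this period is independent of $y_0$ by Stokes, and show the non-identity contribution tends to $0$ as $y_0\to\infty$ by the shrinking-horosphere estimate. This is more elementary and entirely bypasses the intertwining operator; what it gives up is generality: it does not extend to Theorem \ref{thm:Ad}, where the coefficient module is $\mathfrak g$, the relevant cohomology is not detected by one scalar period, and the vanishing of the restricted class is a statement about exactness rather than a vanishing integral --- there the paper's constant-term computation is genuinely needed. The paper's method also produces the precise functional form of the constant term, which your argument does not see.

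Two points in your write-up deserve tightening. First, passing to the limit $y_0\to\infty$ inside the infinite sum of periods requires a dominating bound \emph{uniform in} $y_0$; the shrinking-diameter observation gives termwise convergence to zero but not domination. A bound of the form $\bigl|\int_{T_i(y_0)}\ga^*\omega_0\bigr|\le C\,y(\ga O)^n$, uniform for $y_0\ge 1$, does the job and is summable precisely under your Poincar\'e hypothesis, so the gap is fillable with the estimate you already invoke --- but it should be exhibited. Second, for conclusion (2) you need that no coset representative sends $\infty$ (the fixed point of $\Ga_j$) to the fixed point of $\Ga_i$; this is exactly non-$\Ga$-conjugacy of the parabolic \emph{fixed points}, equivalent to non-conjugacy of the subgroups since $\Ga_i,\Ga_j$ are full stabilizers, so your reduction is fine but worth one sentence.
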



In the above theorem, the  \emph{critical exponent}    $\delta(\Ga)$ of $\Ga$ is defined as
\[\delta(\Gamma)=\inf\{s\;:\; \sum_{\ga\in \Ga}e^{-sd(O,\ga O)}<\infty\}.\]
Note that if $\Ga\subset \Isom(\H^{n+1})$, then $0\leq \delta(\Ga)\leq n$. For simplicity, we sometimes write $\delta$ for $\delta(\Ga)$ if the context is clear. The group $\Ga$ is said to be of convergence type if the above infimum is achieved. The additional assumption on the critical exponent or on the convergence type of $\Ga$ is to assure the absolute convergence of the Eisenstein series. This is necessary for our theorem to hold because in the case $\Ga$ is a non-uniform lattice (where $\delta=n$ and $\Ga$ is of divergence type), the degree $n$-homology classes coming from the cusps form a linearly dependent system, thus by Stokes' theorem, the result in our Theorem \ref{thm:trivial} will never hold. To our surprise, by examining the entire argument in our proof, the non-convergence of the Eisenstein series is the only place where it fails. However, if we choose the coefficient module to be the Lie algebra $\mathfrak g$ of $G$, equipped with the natural adjoint action of $\Ga$ inherited from $G$, then the absolute convergence issue will be resolved. This does not contradict to the example of non-uniform lattices since we do not have Stokes' theorem for $\mathfrak g$ coefficient. More precisely we prove,

\begin{theorem}\label{thm:Ad}
	Let $\Ga<\Isom^+(\mathbb H^{n+1})$ be a torsion free discrete subgroup. Then for any parabolic subgroup $\Ga_i<\Ga$ of rank $n$, and any cohomology class $\alpha_i\in H^{n}(\Ga_i,\Ad)$, there is a closed differential form $E(\alpha_i)$ on $\Ga\backslash \H^{n+1}$ constructed via the Eisenstein series such that
	\begin{enumerate}
		\item The restriction homomorphism $H^{n}(\Ga,\Ad)\rightarrow H^n(\Ga_i, \Ad)$ sends $[E(\alpha_i)]$ to $\alpha_i$. In particular, there is a surjective homomorphism
		\[H^{n}(\Ga,\Ad)\rightarrow H^n(\Ga_i, \Ad).\]	
		\item If $\Ga_i, \Ga_j$ are not $\Ga$-conjugate, then the restriction homomorphism $H^{n}(\Ga,\Ad)\rightarrow H^n(\Ga_j, \Ad)$ sends $[E(\alpha_i)]$ to $0$.
	\end{enumerate} 
\end{theorem}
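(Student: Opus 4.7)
My plan for Theorem~\ref{thm:Ad} parallels that of Theorem~\ref{thm:trivial}: construct a closed $\g$-valued $n$-form $\tilde{\omega}_i$ representing $\alpha_i$ and supported near the $i$-th cusp, then average over the cosets in $\Ga_i\backslash\Ga$. What lets us drop the hypothesis $\delta<n$ or of convergence type in this adjoint setting is the extra exponential decay supplied by the $\Ad$-twist, which is visible weight by weight under the Langlands decomposition of the cusp stabilizer.

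For the setup, I would conjugate in $G$ so that the cusp of $\Ga_i$ lies at $\infty$ in the upper half-space model, making $\Ga_i$ a full-rank lattice in the unipotent radical $N$ of the standard parabolic $P=MAN$; the quotient $\Ga_i\backslash N$ is then a flat $n$-torus. Pick a smooth closed $\g$-valued $n$-form $\tilde{\omega}_i$ representing $\alpha_i$ and supported in a sufficiently deep horoball at $\infty$, and set
\[
E(\alpha_i) \;=\; \sum_{[\ga]\in\Ga_i\backslash\Ga}\ga^{*}\tilde{\omega}_i,
\]
where the pullback $\ga^{*}$ incorporates the Ad-twist on the coefficient values. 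This is a formal Eisenstein-type series attached to the cusp of $\Ga_i$.

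To prove absolute convergence, I would decompose $\g=\mathfrak n^-\oplus\mathfrak m\oplus\mathfrak a\oplus\mathfrak n$ into $\Ad(\mathfrak a)$-weight spaces and bound $\|\ga^{*}\tilde{\omega}_i\|$ componentwise, combining the geometric decay of the horospherical $n$-form with the directional contraction of $\Ad(\ga^{-1})$ on each weight component. A careful accounting of these factors should dominate the tail of the sum by a Poincar\'e-type series $\sum e^{-sd(O,\ga O)}$ for some $s>n$, which converges for every discrete $\Ga$ independent of $\delta$; the essential feature is that the $\Ad$-twist produces a net exponential gain that is absent in the trivial-coefficient case, which is exactly why no hypothesis on $\delta$ is needed here. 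Closedness of $E(\alpha_i)$ is then immediate, since each summand is closed and the series converges in $C^\infty$.

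Finally, the two restriction assertions reduce to an intertwining-operator computation. Restricting $E(\alpha_i)$ to a horoball around the cusp of $\Ga_j$ and organizing the sum by the orbits in $\Ga_i\backslash\Ga/\Ga_j$, the only orbit whose contribution fails to be exact on that horoball is the diagonal one, which exists precisely when $\Ga_j$ is $\Ga$-conjugate to $\Ga_i$ and there returns $\omega_i$ modulo an exact form. All other orbits can be tamed by primitives written down explicitly through the Iwasawa decomposition of a coset representative, together with the same weight-by-weight estimates as in the convergence step. This gives (1) and (2); the surjectivity in (1) follows since $\alpha_i$ was arbitrary. I expect the hardest part to be precisely this combined convergence-and-intertwining analysis---the Lie-algebraic weight decomposition of $\Ad$ has to mesh cleanly with the geometric horospherical decay, and every non-diagonal double coset must be shown, explicitly, to contribute cohomologically trivially on the target cusp.
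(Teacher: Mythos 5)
Your overall strategy---build a closed form attached to the cusp, sum over $\Ga_i\backslash\Ga$, and settle the restriction statements by an intertwining computation over double cosets---is the same as the paper's, but two of your key steps contain genuine gaps.

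First, the choice of representative. A closed $\Ga_i$-invariant $n$-form supported in a \emph{sufficiently deep} horoball represents the \emph{zero} class in $H^{n}(\Ga_i,\Ad)$: the quotient $\Ga_i\backslash\H^{n+1}\cong \mathbb T^n\times(0,\infty)$ deformation retracts onto any cross-section, the class of a closed form is detected by restriction to a cross-section, and every cross-section below the support sees the zero form. So your $\tilde\omega_i$ cannot represent a nonzero $\alpha_i$, and the diagonal double coset would return $0$ rather than $\alpha_i$. The paper's representative is necessarily global in the cusp direction: it is the $N_\xi$-invariant form $\phi_\xi$ on the horosphere with initial value $(u_1^*\wedge\dots\wedge u_n^*)\otimes v_\xi$, $v_\xi\in V_{-2}$, extended over all of $\H^{n+1}$ by the power law $t^{s}$ (the degree $s$ extension), with closedness forcing $s=2n+2$ by Proposition \ref{prop:adclosed}.

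Second, the convergence mechanism. The claim that ``the $\Ad$-twist produces a net exponential gain'' is false for a general closed representative of $\alpha_i$: under the weight decomposition $V=V_{-2}\oplus V_0\oplus V_2$, the adjoint action expands on $V_2$ exactly as fast as it contracts on $V_{-2}$, so a representative with a $V_0\oplus V_2$ component (which a generic closed form representing $\alpha_i$ will have, since one may add any exact form) gains nothing and can even lose. The gain is available only because $H^{n}(\mathfrak u,\Ad)\cong V_{-2}$ (Lemma \ref{lem:lie alg cohomology}, plus Proposition \ref{prop:nontoric} in the non-toric case), so the representative can be chosen with coefficient purely in the lowest weight space; this is precisely what shifts the closed extension from degree $2n$ to degree $2n+2$ and reduces convergence to the Poincar\'e series at exponent $s/2=n+1>n\geq\delta$ (Proposition \ref{prop:convergence}), which converges for every discrete $\Ga$. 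Without identifying and using this lowest-weight normalization, your ``weight by weight'' estimate cannot close. Relatedly, in the restriction step the off-diagonal contribution is not handled coset by coset (individual coset terms are not closed, so they have no cohomology class); the paper shows the \emph{total} constant term $c_s(\phi_{\xi'})_{2n-s}$ is a coboundary by combining closedness of the full Eisenstein series with the same weight decomposition.
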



Both Theorem $\ref{thm:trivial}$ and $\ref{thm:Ad}$ give a way to control the number of full rank cusps $N$ on the quotient manifold. The case of trivial coefficient implies $N\leq \beta_n(\Ga)$ (given that $\delta<n$ or that $\Ga$ is of convergence type) where $\beta_n(\Ga)$ denotes the $n$-th betti number of $\Ga$, but this is clear since a full rank cusp is always a topological end and $H^{n+1}(\Ga,\Ga_i,\R)=0$ unless $\Ga$ is a cocompact lattice. So the surjectivity of the restriction homomorphism $H^{n}(\Ga,\R)\rightarrow H^n(\Ga_i,\R)$ follows immediately from the long exact sequence for the pair $(\Ga,\Ga_i)$. In the case of adjoint representation, we obtain a similar bound.



\begin{corollary}\label{cor:cusp}
	Let $\Ga<\Isom^+(\mathbb H^{n+1})$ be a torsion free discrete subgroup. Then the number of full rank toric cusps of $\Ga\backslash \H^{n+1}$ is bounded by
	\[N\leq \frac{1}{n}\dim(H^n(\Ga,\Ad)).\]
\end{corollary}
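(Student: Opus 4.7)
The plan is to combine Theorem \ref{thm:Ad} with Poincar\'e duality for $\Z^n$ and a Lie-theoretic computation of coinvariants. Let $\Ga_1,\ldots,\Ga_N$ represent the $\Ga$-conjugacy classes of full rank toric cusp subgroups of $\Ga\backslash\H^{n+1}$. By Theorem \ref{thm:Ad}, for each $i$ and each $\alpha_i\in H^n(\Ga_i,\Ad)$ the class $[E(\alpha_i)]\in H^n(\Ga,\Ad)$ restricts to $\alpha_i$ on $\Ga_i$ and to $0$ on every non-conjugate $\Ga_j$. Given a tuple $(\alpha_i)_i$, the class $\beta:=\sum_i[E(\alpha_i)]$ therefore restricts to $\alpha_k$ on each $\Ga_k$, so the total restriction map
\[H^n(\Ga,\Ad)\longrightarrow\bigoplus_{i=1}^{N}H^n(\Ga_i,\Ad)\]
is surjective. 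In particular $\sum_i\dim H^n(\Ga_i,\Ad)\le\dim H^n(\Ga,\Ad)$.

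It then suffices to show $\dim H^n(\Ga_i,\Ad)=n$ for each toric cusp. Since $\Ga_i\cong\Z^n$ is an orientable Poincar\'e duality group of dimension $n$ with trivial dualizing module, $H^n(\Ga_i,\Ad)\cong H_0(\Ga_i,\Ad)\cong \g_{\Ga_i}$, the coinvariants. Conjugating so the cusp lies at $\infty$, $\Ga_i$ embeds as a lattice in the horospherical subgroup $N\cong\R^n$, with corresponding lattice $\log\Ga_i\subset\mathfrak{n}$. For $\gamma=\exp(X)$ with $X\in\log\Ga_i$,
\[\Ad(\gamma)-\mathrm{id}=\ad X+\tfrac{1}{2}(\ad X)^2=\ad X\circ\bigl(\mathrm{id}+\tfrac{1}{2}\ad X\bigr),\]
and since $(\ad X)^3=0$ on $\g=\mathfrak{so}(n+1,1)$ by the real-rank-one root space decomposition, the second factor is invertible. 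Hence $(\Ad(\gamma)-\mathrm{id})(\g)=\ad X(\g)$, and summing over $\log\Ga_i$ (which $\R$-spans $\mathfrak{n}$) gives $[\mathfrak{n},\g]$.

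Using $\g=\bar{\mathfrak{n}}\oplus(\mathfrak{m}\oplus\mathfrak{a})\oplus\mathfrak{n}$ together with the surjective brackets $[\mathfrak{n},\bar{\mathfrak{n}}]=\mathfrak{m}\oplus\mathfrak{a}$ and $[\mathfrak{n},\mathfrak{m}\oplus\mathfrak{a}]=\mathfrak{n}$, one obtains $[\mathfrak{n},\g]=\mathfrak{m}\oplus\mathfrak{a}\oplus\mathfrak{n}$, so $\g_{\Ga_i}\cong\bar{\mathfrak{n}}$ has dimension $n$. Combining with the earlier surjection yields $Nn\le\dim H^n(\Ga,\Ad)$, the claimed bound.

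The main technical point to verify carefully is the surjectivity of the bracket maps $\mathfrak{n}\otimes\bar{\mathfrak{n}}\to\mathfrak{m}\oplus\mathfrak{a}$ and $\mathfrak{n}\otimes(\mathfrak{m}\oplus\mathfrak{a})\to\mathfrak{n}$ inside $\mathfrak{so}(n+1,1)$; this is standard but requires explicit generators. A more conceptual subtlety is the role of the toric hypothesis: for a merely virtually abelian full rank cusp, the finite quotient $F=\Ga_i/\Z^n$ acts by rotations on $\bar{\mathfrak{n}}$, and the relevant coinvariants $\bar{\mathfrak{n}}^F$ can have dimension strictly less than $n$; the toric assumption precisely removes this loss and makes the per-cusp contribution equal to $n$.
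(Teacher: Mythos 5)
Your proof is correct, and its skeleton is the same as the paper's: combine Theorem \ref{thm:Ad} with the fact that each toric cusp contributes $\dim H^n(\Ga_i,\Ad)=n$. Your surjectivity of the total restriction map $H^n(\Ga,\Ad)\to\bigoplus_i H^n(\Ga_i,\Ad)$ is exactly the linear independence the paper invokes. The one place you diverge is the per-cusp dimension count: the paper gets $\dim H^n(\Ga_i,\Ad)=n$ immediately from the Van Est isomorphism (Theorem \ref{thm:vanEst}) together with Lemma \ref{lem:lie alg cohomology}, which identifies $H^n(\mathfrak u,\Ad)\cong V_{-2}$, whereas you re-derive it via Poincar\'e duality for $\Z^n$ and a computation of the coinvariants $\g_{\Ga_i}\cong\g/[\mathfrak n,\g]\cong\bar{\mathfrak n}$. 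These are two faces of the same fact, and the bracket surjectivity you flag as the ``main technical point to verify'' ($[\mathfrak n,\bar{\mathfrak n}]=\mathfrak m\oplus\mathfrak a$ and $[\mathfrak n,\mathfrak m\oplus\mathfrak a]=\mathfrak n$) is precisely the identity $[V,V_2]=V_0\oplus V_2$ already used in the proof of Lemma \ref{lem:lie alg cohomology}, so you could simply cite that rather than redo it. Your coinvariants argument (using $(\ad X)^3=0$ so that $\Ad(\exp X)-\mathrm{id}=\ad X\circ(\mathrm{id}+\tfrac12\ad X)$ has image $\ad X(\g)$) is clean and self-contained, and your closing observation about non-toric cusps matches the paper's remark after Proposition \ref{prop:nontoric} that $H^n(\Ga_\xi,\Ad)$ can drop in dimension, or even vanish, when the rotational part of $\Ga_\xi$ fixes no vector.
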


\begin{remark}
The reason why we need to add the toric cusp condition is that in general $H^{n}(\Ga_i, \Ad)$ could be trivial (See Proposition \ref{prop:nontoric} and Remark \ref{rem:non-toric}). If the group $\Ga$ is LERF, then we can always pass onto a finite cover of $\Ga\backslash \H^{n+1}$ to assure a given full rank cusp is toric. 
\end{remark}


\subsection*{Organization of the paper} In Section \ref{sec:pre}, we review vector-valued differential forms, their equivalent perspective as functions on Lie groups, and the Lie algebra cohomology. In Section \ref{sec:nform}, we construct a cohomology class for each full rank parabolic fixed point with the coefficient either $\R$ or the Lie algebra of $\Isom^+(\H^{n+1})$ with the adjoint representation. In Section \ref{sec:eisen}, we construct the Eisenstein series and discuss its closeness and convergence. In Section \ref{sec:intertwining}, we investigate the restriction of the Eisenstein series to the horosphere corresponding to any given cusp. In Section \ref{sec:proof}, we prove Theorem \ref{thm:trivial}, \ref{thm:Ad} and Corollary \ref{cor:cusp}.  

\subsection*{Acknowledgments} We would like to thank Dubi Kelmer and Misha Kapovich for the helpful emails, and Grigori Avramidi for the helpful discussions. We are also grateful to the referees for careful reading and comments, and Max Planck Institute for Mathematics in Bonn for its hospitality and financial support where this work was initiated. The second author also thanks Michigan State University for its hospitality where most of the work was done. The first author is partially supported by the NSF grant DMS-2203237.



\section{Preliminary}
\label{sec:pre}

\subsection{Vector-valued differential forms}\label{sec:differential forms}
Let $V$ be a finite dimensional real vector space and $\rho: G\rightarrow \Aut(V)$ be any continuous (and hence smooth) representation. Since $\rho$ restricts to any discrete subgroup $\Ga$, and $\Ga$ naturally acts from the left on $X=G/K$ where $K$ is the stabilizer of a point in $G$, it follows that $\Ga$ also left acts on the trivial bundle $V\times X$ via
\[\gamma \cdot (v,x)=(\rho(\gamma)(v),\gamma x).\]
Endowed with the trivial connection on $V\times X$, it induces a flat bundle structure on the quotient manifold $M=\Ga\backslash X$, which we denote by $V_{\rho}$. It is known that the cohomology with the associated local system $H^*(\Ga,V_{\rho})$ can be computed using the DeRham complex $\Omega^*(X,V)^\Ga$, where the codifferential operator $d:\Omega^k(X,V)\rightarrow \Omega^{k+1}(X,V)$ is defined by
\begin{align*}\label{eq:codifferential}
	d\omega(X_1,..,X_{k+1})&:=\sum_i (-1)^{i+1} X_i\omega(X_1,...,\widehat X_i,...,X_{k+1})\\
	&+\sum_{i<j}(-1)^{i+j}\omega([X_i,X_j],X_1,...,\widehat X_i,..., \widehat X_j, ..., X_{k+1}).
\end{align*}
To this end, any cohomology class in $H^*(\Ga,V_\rho)$ can be represented by a $\Ga$-invariant $V$-valued closed differential form on $X$.

\subsection{Matsushima-Murakami formalism}

For the purpose of computations, it is convenient to view alternatively the above mentioned vector valued differential forms as a smooth function from $G$ to $\Hom(\Lambda^*\mathfrak g, V)$. We follow the original treatment in \cite[Section 4]{Matsushima-Murakami}.

Given any $\eta\in \Omega^*(X,V)^\Ga$, we can pull back the differential form to $G$ under the projection $\pi:G\rightarrow X=G/K$,  followed by a twist of a $G$-action. Define a differential form $\widetilde{\eta}\in\Omega^*(G, V)$ by
\[\widetilde\eta_s:=\rho(s^{-1})(\eta\circ \pi)_s,\quad \forall s\in G.\]
Then one can check $\eta\in \Omega^*(X,V)^\Ga$ if and only if $\widetilde{\eta}$ satisfies
\begin{enumerate}
	\item $\widetilde{\eta}\circ L_\ga=\widetilde{\eta}$ for any $\gamma\in \Ga$,
	\item $\widetilde{\eta}\circ R_k=\rho(k^{-1})\widetilde{\eta}$ for any $k\in K$, and
	\item $i(Y)\widetilde{\eta}=0$ for any $Y\in \mathfrak k$,
\end{enumerate}
where $\mathfrak{k}$ denotes the Lie algebra of $K$, and $L_{\ga}, R_{k}$ denote the left and right multiplications. 
Furthermore, we can view $\widetilde{\eta}$ as a function on $G$ whose values are in $\Hom(\Lambda^*\mathfrak g, V)$ by identifying $T_sG$ with $T_eG\cong \mathfrak g$ via left translation, and the above constraints then turn into
\begin{enumerate}
	\item $\widetilde{\eta}(\gamma g)=\widetilde{\eta}(g)$ for any $\gamma\in \Ga$ and $g\in G$,
	\item $\widetilde{\eta}(gk)=\Ad_{\mathfrak g}^*(k^{-1})\otimes\rho(k^{-1})(\widetilde{\eta}(g))$ for any $k\in K$ and $g\in G$, and
	\item $i(Y)\widetilde{\eta}=0$ for any $Y\in \mathfrak k$.
\end{enumerate}
where $\Ad_{\mathfrak g}^*$ is the dual adjoint representation of $G$ on $\Lambda^*(\mathfrak g)$.  Fix a basepoint on $X$, we write $\mathfrak{g}=\mathfrak{k}\oplus \mathfrak{p}$ the Cartan decomposition. Then there is a natural identification between $\Omega^*(X,V)^\Ga$ and functions $\varphi\in C^\infty(G/\Ga,\Hom(\Lambda^*\mathfrak p,V))$ which satisfies
\[\varphi(gk)=\Ad_{\mathfrak p}^*(k^{-1})\otimes\rho(k^{-1})(\varphi(g)),\quad \forall k\in K, g\in G.\]

Under such identification, the coboundary operator $d: \Omega^k(X,V)\rightarrow \Omega^{k+1}(X,V)$ as described gives rise to the coboundary operator \[d:C^\infty(G,\Hom(\Lambda^k\mathfrak g,V))\rightarrow C^\infty(G,\Hom(\Lambda^{k+1}\mathfrak g,V))\]
given by (\cite[Proof of Proposition 4.1]{Matsushima-Murakami})
\begin{equation}\label{eq:coboundary-operator}
	\begin{aligned}
		d\varphi(X_1,...,X_{k+1})&=\sum_{i} (-1)^{i+1} (X_i+\rho(X_i))\varphi(X_1,...,\widehat X_i,...,X_{k+1})\\
		&+\sum_{i<j}(-1)^{i+j}\varphi([X_i,X_j],X_1,...,\widehat X_i,..., \widehat X_j, ..., X_{k+1}),
	\end{aligned}
\end{equation}

Here we abuse notation and still use $\rho$ to denote the induced Lie algebra representation $\rho: \mathfrak{g}\rightarrow \End(V)$. Note that in the original statement of \cite[Proposition 4.1]{Matsushima-Murakami}, the second term of the above equation \eqref{eq:coboundary-operator} vanishes. This is because their function $\varphi$ is valued on $\Hom(\Lambda^*\mathfrak p,V)$ and that $[X_i,X_j]\in \mathfrak k$ for any pair $X_i, X_j\in \mathfrak p$. However, for the purpose of computation, besides the usual coset model $\H^{n}=G/K=\SO^+(n,1)/\SO(n)$, we will also use $\H^{n}=P_\xi/K_\xi $, where $P_{\xi}$ is a maximal parabolic group for the parabolic fixed point $\xi$, and $K_{\xi}=P_{\xi}\cap K$. Let us be a little more verbose here as this description is essential to the computations of $\phi_{\xi,s}$ in Section \ref{sec:nform}, \ref{sec:intertwining} and \ref{sec:proof}.

Under the Langlands decomposition, we have $P_\xi=N_\xi A_\xi K_\xi$, and accordingly the Lie algebra splits as $\mathfrak{p}_\xi= \mathfrak n_\xi\oplus \mathfrak a_\xi\oplus \mathfrak m_\xi $. Thus from the above discussions, any differential form $\eta\in \Omega^k(X,V)$ can be viewed as a function $\widetilde{\eta}\in C^\infty(P_\xi, \Hom(\Lambda^{k}\mathfrak p_\xi, V))$ which satisfies

\begin{enumerate}
	\item $\widetilde{\eta}(pm)=\Ad_{\mathfrak p_\xi}^*(m^{-1})\otimes\rho(m^{-1})(\widetilde{\eta}(p))$ for any $m\in K_\xi$ and $p\in P_\xi$, and
	\item $i(Y)\widetilde{\eta}=0$ for any $Y\in \mathfrak m_\xi$,
\end{enumerate}
where the second property shows that we can further view $\widetilde{\eta}$ as in $C^\infty(P_\xi, \Hom(\Lambda^{k}\mathfrak (\mathfrak a_\xi\oplus \mathfrak n_\xi), V))$. If $H<P_\xi$, then $\eta$ is $H$-invariant if and only if $\widetilde \eta$ is $H$-left invariant as a function. Note that the Lie bracket $[\mathfrak a_\xi ,\mathfrak n_\xi]$ stays in $\mathfrak n_\xi$, so in particular the second term in \eqref{eq:coboundary-operator} will possibly be nonzero. (See Proposition \ref{prop:adclosed} and compare the proof of \cite[Lemma 3.1]{Harder}.) 


\begin{remark}
	Our convention uses left action of $G$ on $X=G/K$, which is different from that in \cite{Harder}. So there are sign differences in the expression of the coboundary operators.
\end{remark}


\begin{lemma}\label{lem:commute}
	For any $g\in G$, if $L_g$ denotes the left action on $C^\infty(G,\Hom(\Lambda^*\mathfrak g,V))$, i.e. $(L_g\varphi)|_a=\varphi|_{ga}$ for any $\varphi\in C^\infty(G,\Hom(\Lambda^{k}\mathfrak g,V))$ and any $a\in G$, then
	\[L_g\circ d=d\circ L_g.\]
	In particular, $\varphi$ is closed if and only if $L_g\varphi$ is closed, and $\varphi$ is a coboundary if and only if $L_g\varphi$ is a coboundary.
\end{lemma}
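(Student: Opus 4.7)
The plan is to unwind the definitions and verify the commutation by checking it on each term of the explicit formula for $d$ given just above the lemma. Write the left action as $(L_g\varphi)(h) = \varphi(g^{-1}h)$. Since $L_g$ is a linear bijection on $C^\infty(G,\Hom(\Lambda^*\mathfrak g,V))$, once $L_g\circ d = d\circ L_g$ is established the two "in particular" statements (preservation of closedness and exactness) are immediate: $d(L_g\varphi)=L_g(d\varphi)$, and if $\varphi=d\psi$ then $L_g\varphi=d(L_g\psi)$, with the converses gotten by applying $L_{g^{-1}}$.

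The commutation itself reduces to three elementary observations. First, when $X\in\mathfrak g$ is viewed as a left-invariant vector field on $G$, the flow of $X$ is right multiplication by $\exp(tX)$, which commutes with the left translation by $g$; hence as a first-order differential operator on $C^\infty(G,V)$ one has $X\circ L_g = L_g\circ X$. Second, the operator $\rho(X)$ in the Matsushima–Murakami formula acts only on the target vector space $V$ pointwise and does not move the argument, so it trivially commutes with $L_g$. Third, the bracket $[X_i,X_j]$ and the contraction on $\Lambda^*\mathfrak g$ are purely algebraic in $\mathfrak g$ and do not involve the argument of $\varphi$ either.

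With these in hand, I would simply apply $L_g$ to the displayed formula for $d\varphi$. The first sum
\[\sum_i (-1)^{i+1}(X_i+\rho(X_i))\varphi(X_1,\dots,\widehat X_i,\dots,X_{k+1})\]
commutes with $L_g$ term by term by the first two observations, and the second sum
\[\sum_{i<j}(-1)^{i+j}\varphi([X_i,X_j],X_1,\dots,\widehat X_i,\dots,\widehat X_j,\dots,X_{k+1})\]
commutes with $L_g$ because $L_g$ merely precomposes the argument. Writing out both sides evaluated at an arbitrary $h\in G$ on the arguments $(X_1,\dots,X_{k+1})$ gives the same expression, which is the desired identity.

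There is no genuine obstacle here; the only thing to be careful about is keeping the conventions straight, in particular the Matsushima–Murakami twist $\widetilde\eta_s=\rho(s^{-1})(\eta\circ\pi)_s$ that explains the appearance of the extra $\rho(X_i)$ term, and the fact (already noted in the remark preceding the lemma) that we are using the left action convention, so $X_i$ is realized as a left-invariant vector field whose flow is right multiplication. Once these are fixed, the lemma is essentially a bookkeeping check that reflects the standard fact that the exterior differential is invariant under isometries of the underlying group when the coefficient system is pulled back by translation.
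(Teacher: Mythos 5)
Your proof is correct and follows essentially the same route as the paper's: a term-by-term verification that the explicit Matsushima--Murakami formula for $d$ commutes with left translation, the key point in both being that left-invariant vector fields (as first-order operators) commute with $L_g$ while $\rho(X_i)$, the bracket, and the contraction are pointwise algebraic. The minor difference in convention ($(L_g\varphi)(h)=\varphi(g^{-1}h)$ versus the paper's $\varphi(gh)$) is immaterial since the statement is quantified over all $g\in G$.
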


\begin{proof}
	For any $a\in G$, $\varphi\in C^\infty(G,\Hom(\Lambda^k\mathfrak g,V))$, and any $X_1,..., X_{k+1}\in \mathfrak g$. We do the following direct computations:
\begin{align*}
	d(L_g \varphi)|_a(X_1,...,X_{k+1})&=\sum_{i} (-1)^{i+1} (X_i+\rho(X_i))(L_g\varphi)|_a(X_1,...,\widehat X_i,...,X_{k+1})\\
	&\quad\quad+\sum_{i<j}(-1)^{i+j}(L_g\varphi)|_a([X_i,X_j],X_1,...,\widehat X_i,..., \widehat X_j, ..., X_{k+1})\\
	&=\sum_{i} (-1)^{i+1} (X_i+\rho(X_i))\varphi|_{ga}(X_1,...,\widehat {X_i},...,X_{k+1})\\
	&\quad\quad+\sum_{i<j}(-1)^{i+j} \varphi|_{ga}([X_i, X_j],X_1,...,\widehat {X_i},..., \widehat {X_j}, ..., X_{k+1})\\
	&=d\varphi|_{ga}(X_1,...,X_{k+1})\\
	&=L_g(d\varphi)|_a(X_1,...,X_{k+1}).
\end{align*}
Thus $L_g\circ d=d\circ L_g$.
\end{proof}

\subsection{Lie algebra cohomology} Let $\mathfrak g$ be a Lie algebra and $\rho:\mathfrak g\rightarrow \End(V)$ be a Lie algebra representation. We define the Chevalley–Eilenberg complex by \[\dots\rightarrow \Hom(\Lambda^k \mathfrak g, V)\xrightarrow{d} \Hom(\Lambda^{k+1} \mathfrak g, V)\rightarrow \dots\]
and the coboundary operator is given by
\begin{align*}
	d\varphi(X_1,...,X_{k+1})&=\sum_{i} (-1)^{i+1} \rho(X_i)\varphi(X_1,...,\widehat X_i,...,X_{k+1})\\
	&+\sum_{i<j}(-1)^{i+j}\varphi([X_i,X_j],X_1,...,\widehat X_i,..., \widehat X_j, ..., X_{k+1}).
\end{align*}
The cohomology induced by the above cochain complex is called the Lie algebra cohomology with $V$-coefficient, denoted by $H^*(\mathfrak g, V_\rho)$.

Since we only work with specific Lie algebras and representations, we will make simplifications by setting $G=\Isom^+(\mathbb H^{n+1})\cong \SO^+(n+1,1)$, and setting either $V=\mathfrak g$ and $\rho:G\rightarrow \End (\mathfrak g)$ the adjoint representation, or $V=\mathbb R$ and $\rho$ the trivial representation. Let $U<G$ be a maximal unipotent subgroup associated to some chosen maximal abelian subgroup $A<G$ such that $U$ is expanding. In the case of adjoint representation, we denote $\mathfrak u, \mathfrak g$ the Lie algebra of $U, G$, and $\rho: \mathfrak u\rightarrow \End(\mathfrak g)$  the restriction of the adjoint representation $\rho$. Diagonalized by the adjoint action of $A$, the vector space $V$ (under the restricted root space decomposition) decomposes as $V=V_{-2}\oplus V_0\oplus V_2$ and that the Lie algebra $\mathfrak u= V_2$. 

\begin{lemma}\label{lem:lie alg cohomology}
	Following the same notations above, if $\{u_1,...,u_n\}$ is a basis of $\mathfrak u$, and $v\in V_{-2}$, then there is a natural isomorphism
	\[J:\;V_{-2}\cong H^{n}(\mathfrak u, V_{\rho}),\]
	given by
	\[v\mapsto (u_1^*\wedge...\wedge u_n^*)\otimes v,\]
    where $\{u_1^*,...,u_n^*\}$ represents the dual basis on $\mathfrak u^*$.
\end{lemma}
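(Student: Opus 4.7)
The plan is to compute $H^n(\mathfrak u, V_{\rho_0})$ directly from the Chevalley--Eilenberg complex. Since $G = \SO^+(n+1,1)$ has real rank one, the algebra $\mathfrak u = V_2$ is abelian: a bracket of two weight-$2$ elements would land in weight $4$, which is not a restricted root. This kills the second sum in the coboundary formula, leaving
$$(d\psi)(u_1,\dots,u_n) \;=\; \sum_{i=1}^n (-1)^{i+1}\rho_0(u_i)\,\psi(u_1,\dots,\hat u_i,\dots,u_n)$$
for $\psi\in\Hom(\Lambda^{n-1}\mathfrak u,V)$. Top-degree cochains are automatically cocycles, and under the identification $\Hom(\Lambda^n\mathfrak u, V)\cong V$ given by $(u_1^*\wedge\cdots\wedge u_n^*)\otimes v\leftrightarrow v$, the image of $d$ is exactly $\rho_0(\mathfrak u)\cdot V = [\mathfrak u, V]$. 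Hence $H^n(\mathfrak u, V_{\rho_0})\cong V/[\mathfrak u, V]$, and the problem reduces to showing that this quotient is naturally identified with $V_{-2}$.

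The identification comes from the grading. Since $\mathfrak u = V_2$ raises weights by $2$, the bracket $[\mathfrak u, V]$ is contained in $V_0\oplus V_2$. For the reverse inclusion I would verify: (a) $[V_2, V_0]\supseteq V_2$, using any $H\in\mathfrak a\subset V_0$ with $[H,X]=2X$ for $X\in V_2$; and (b) $[V_2, V_{-2}] = V_0$. Once these hold, $V = V_{-2}\oplus[\mathfrak u, V]$, so the inclusion $V_{-2}\hookrightarrow V$ descends to an isomorphism $V_{-2}\xrightarrow{\cong} V/[\mathfrak u,V]$. Unwinding the identification, the class of $(u_1^*\wedge\cdots\wedge u_n^*)\otimes v$ corresponds to $v\in V_{-2}$, which is exactly the stated map $J$.

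The main obstacle is the equality $[V_2, V_{-2}] = V_0$ in step (b); the containment $[V_2, V_{-2}]\subseteq V_0$ is automatic, but one must exhaust both the $\mathfrak a$-part and the Levi part $\mathfrak m = \mathfrak{so}(n)$. I would handle this with an explicit matrix computation in the defining representation of $\mathfrak{so}(n+1,1)$, where $V_2$ and $V_{-2}$ are both parametrized by $\R^n$; then $[N_v, \bar N_w]$ splits into an $\mathfrak a$-part proportional to $\langle v,w\rangle$ and an $\mathfrak m$-part proportional to the skew endomorphism associated with $v\wedge w$. Letting $v,w$ run over a basis of $\R^n$ yields all of $\mathfrak a\oplus\mathfrak m$. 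The case $n=1$ is vacuous since then $\mathfrak m=0$ and $\mathfrak a$ is already produced by any nonzero $\langle v,v\rangle$.
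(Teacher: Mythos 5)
Your proof is correct and follows essentially the same route as the paper: both identify $H^{n}(\mathfrak u, V_{\rho_0})$ with $V/[\mathfrak u,V]$ (the paper phrases this as injectivity plus surjectivity of $J$, using that $\mathfrak u$ is abelian and that the top differential has image $[\mathfrak u,V]$) and then invoke $[\mathfrak u,V]=V_0\oplus V_2$. The only real difference is that you propose to verify $[V_2,V_{-2}]=V_0$ by an explicit matrix computation, whereas the paper simply asserts $[V,V_2]=V_0\oplus V_2$ without proof.
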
 

\begin{proof}
	Since $n=\dim \mathfrak u$, the cochain complex stops in dimension $n$, so $(u_1^*\wedge...\wedge u_n^*)\otimes v$ is automatically closed. For the injectivity of $J$, it suffices to show that for any nonzero $v\in V_{-2}$, the closed form $(u_1^*\wedge...\wedge u_n^*)\otimes v$ does not lie in the image of
	\[d: \Hom(\Lambda^{n-1}\mathfrak u, V)\rightarrow \Hom(\Lambda^{n}\mathfrak u, V).\]
	We write an arbitrary element in $\Hom(\Lambda^{n-1}\mathfrak u, V)$ as
	\[\varphi= \sum_{i=1}^n \left(u_1^*\wedge...\wedge \widehat{u_i^*}\wedge...\wedge u_n^*\right) \otimes A_i,\]
	for some $A_i\in V$. Then we compute
	\[d\varphi= \left(u_1^*\wedge...\wedge u_n^*\right)\otimes\left(\sum_{i=1}^n (-1)^{i+1}\rho(u_i)A_i\right). \]
	Since $[V,V_2]=V_0\oplus V_2$, we see that $\left(\sum_{i=1}^k (-1)^{i+1} \rho(u_i)A_i\right)\in V_0\oplus V_2$, and in particular it does not lie in $V_{-2}$. Thus $(u_1^*\wedge...\wedge u_n^*)\otimes v$ is not a coboundary, and it represents a nontrivial cohomology class. This shows the map $J$ is injective.
	
	To show $J$ is surjective, we need to show any element $(u_1^*\wedge...\wedge u_{n}^*)\otimes v$ where $v\in V_0\oplus V_2$ is a coboundary. Since $[V,V_2]=V_0\oplus V_2$, we can write $v=\sum_{i=1}^{n}[u_i,v_i]$ for some $v_i\in V$. Now if we set
	\[\varphi= \sum_{i=1}^n \left(u_1^*\wedge...\wedge \widehat{u_i^*}\wedge...\wedge u_n^*\right) \otimes (-1)^{i+1}v_i\in \Hom(\Lambda^{n-1}\mathfrak u, V),\]
	then $d\varphi= (u_1^*\wedge...\wedge u_{n}^*)\otimes v$. This proves that $(u_1^*\wedge...\wedge u_{n}^*)\otimes v$ is a coboundary. Thus $J$ is surjective.
\end{proof}

One important aspect of the Lie algebra cohomology is that it sometimes relates to the group cohomology of a Lie group $G$, and those classes can be identified with certain $G$-invariant differential forms which are harmonic. We will use the following Van-Est isomorphism theorem in the context of abelian Lie groups.

\begin{theorem}\label{thm:vanEst} \cite{VanEst}
	Let $U$ be an $n$-dimensional Lie group isomorphic to $\mathbb R^n$, and $Z<U$ be a torsion-free cocompact lattice. Let $\rho: U\rightarrow \Aut(V)$ be a representation which induces the Lie algebra representation $\rho':\mathfrak u\rightarrow \End(V)$. Then there is an natural isomorphism 
	\[\Phi:H^*(Z,V_\rho)\cong H^*(\mathfrak u, V_{\rho'}),\]
	explicitly given by the following, 
	for any $Z$-invariant closed differential form $\omega\in \Omega^*(U,V_\rho)$, set
	\[\Phi(\omega)=\int_{U/Z}\omega(x)d\mu(x),\]
	where $d\mu$ is the Haar measure on $U/Z$. Thus, $\Phi(\omega)$ is an $U$-invariant differential form that can be identified with an element in $\Hom(\Lambda^*(\mathfrak u), V_{\rho'})$.
\end{theorem}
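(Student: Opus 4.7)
My plan is to prove this in the standard Van Est style: realize $\Phi$ at the cochain level as a quasi-isomorphism between the de Rham complex of $V$-valued forms on $U/Z$ and the Chevalley--Eilenberg complex, by means of a twist-and-average construction. Since $U\cong\R^n$ is contractible and $Z$ acts freely and cocompactly, $H^*(Z,V_\rho)$ is computed by the de Rham complex $\Omega^*(U,V)^Z$ of smooth $V$-valued forms $\omega$ on $U$ satisfying $\omega(zu)=\rho(z)\omega(u)$. Realizing $\omega$ as a smooth map $U\to\Hom(\Lambda^*\mathfrak u,V)$ via left-invariant trivialization, the twist $\tilde\omega(u):=\rho(u)^{-1}\omega(u)$ is genuinely $Z$-periodic and descends to the flat torus $U/Z$. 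Set
\[\Phi(\omega):=\int_{U/Z}\tilde\omega(u)\,d\mu(u)\in\Hom(\Lambda^*\mathfrak u,V),\]
identified with a constant (hence $U$-invariant) form on $U$.

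The chain map property follows by exploiting that $U$ is abelian: differentiating $\omega(u)=\rho(u)\tilde\omega(u)$ along a basis $\{e_i\}$ of $\mathfrak u$ and using $\partial_{e_i}\rho(u)=\rho'(e_i)\rho(u)$ yields
\[d\omega=\rho(u)\bigl(d_{\mathrm{CE}}\tilde\omega+d_{\mathrm{per}}\tilde\omega\bigr),\]
where $d_{\mathrm{CE}}$ is the Chevalley--Eilenberg differential on $\Hom(\Lambda^*\mathfrak u,V)$ built from $\rho'$, and $d_{\mathrm{per}}$ is the ordinary exterior derivative on $\Hom(\Lambda^*\mathfrak u,V)$-valued functions on the torus $U/Z$. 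Integration over the closed manifold $U/Z$ kills the $d_{\mathrm{per}}$ piece by Stokes, giving $\Phi(d\omega)=d_{\mathrm{CE}}\Phi(\omega)$.

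For the inverse direction, define $\Psi(\phi)(u):=\rho(u)\phi$ for $\phi\in\Hom(\Lambda^*\mathfrak u,V)$; then $\Psi(\phi)$ is $Z$-equivariant, $\Psi$ intertwines $d_{\mathrm{CE}}$ with $d$ by the same abelian computation, and $\Phi\circ\Psi=\mathrm{id}$ once $\mu$ is normalized to unit volume, so $\Phi$ is surjective on cohomology. For injectivity, I would expand the periodic $\tilde\omega$ in a Fourier series $\sum_m\tilde\omega_m e^{2\pi i\langle m,u\rangle}$ on $U/Z$, observe that $d_{\mathrm{CE}}+d_{\mathrm{per}}$ preserves each Fourier mode, and build on every non-zero Fourier mode an explicit Koszul-type contracting homotopy that inverts the non-zero frequency; the $m=0$ mode recovers $\Psi(\Phi(\omega))$.

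The delicate point is acyclicity of the non-constant Fourier modes when $\rho$ is non-trivial: a priori, if the eigenvalues of $\rho'$ resonated with some $2\pi i\langle m,\cdot\rangle$, the Koszul inverse would degenerate. In this paper's applications, however, $\rho$ is either trivial or the adjoint representation of the unipotent group $U$ on $\mathfrak g$; in both cases $\rho(u)$ is polynomial in $u$ and the relevant eigenvalues of $\rho'$ vanish, so no resonance arises and the Fourier argument goes through cleanly.
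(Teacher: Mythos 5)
The paper does not actually prove this statement --- it is quoted from van Est --- so there is no in-text argument to compare yours against; your job here is really to supply the missing proof, and your outline is the standard one and is essentially correct \emph{for the representations the paper uses}. The cochain-level construction (twist by $\rho(u)^{-1}$ to obtain a genuinely $Z$-periodic $\Hom(\Lambda^*\mathfrak u,V)$-valued function, split the differential as $d_{\mathrm{CE}}+d_{\mathrm{per}}$ using that $U$ is abelian, average over $U/Z$ to kill $d_{\mathrm{per}}$ by Stokes, and contract each nonzero Fourier mode by the Koszul homotopy $h=\iota_{e_i}\circ(\rho'(e_i)+2\pi i m_i)^{-1}$ for any $i$ with $m_i\neq 0$) is sound. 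Two points deserve emphasis. First, the ``delicate point'' you flag at the end is not merely an obstacle to your particular method: the theorem \emph{as stated}, for an arbitrary representation $\rho:U\rightarrow \Aut(V)$, is false. Take $n=1$, $V=\R^2$, and $\rho(t)$ the rotation by angle $2\pi t$; then $Z=\Z$ acts trivially, so $H^0(Z,V_\rho)=V$ is two-dimensional, while $\rho'$ is invertible and $H^0(\mathfrak u,V_{\rho'})=\ker\rho'=0$, and indeed your $\Phi$ annihilates the constant forms. So a no-resonance (e.g.\ unipotence) hypothesis on $\rho$ is required for the \emph{statement}, not just for the proof; as you observe, it holds in both cases the paper needs ($\rho$ trivial, or the restriction of $\Ad$ to $N_\xi$, which acts by unipotent operators, so all eigenvalues of $\rho'$ vanish). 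It would be worth flagging that the theorem should be quoted with this hypothesis. Second, to complete the injectivity step you should record why the mode-by-mode Koszul primitives assemble to a smooth form: since $\rho'(e_i)$ is nilpotent, $(\rho'(e_i)+2\pi i m_i)^{-1}$ is a finite Neumann series whose norm grows at most polynomially in $m$, so applied to the rapidly decaying Fourier coefficients of the smooth function $\tilde\omega$ it yields a rapidly decaying sequence and hence a smooth primitive. With those two points made explicit, your argument is a complete and self-contained proof of the version of the theorem the paper actually uses.
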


\section{Construction of cohomology classes from a cusp}
\label{sec:nform}

Suppose that  $G=\Isom^+(\mathbb H^{n+1})\cong \SO^+(n+1,1)$, and $\Ga<G$ is a torsion-free discrete subgroup. For each cusp on the quotient manifold $\Ga\backslash \mathbb H^{n+1}$, when lifted to the universal cover, it associates to a $\Ga$-orbit  $\Ga \xi$ on $\partial _\infty \mathbb H^{n+1}$, for some $\xi\in \partial _\infty \mathbb H^{n+1}$. The fundamental group of the cusp is isomorphic to $\Ga_\xi=\Ga\cap P_\xi$ where $P_{\xi}=\stab_G(\xi)<G$ is the real parabolic subgroup at $\xi$. 

Fix a base point $O\in \H^{n+1}$, and let $\mathcal H_\xi(1)$ be the horosphere of the parabolic fixed point $\xi$ through $O$. Under the induced Riemannian metric, it is isometric to the standard Euclidean space $\mathbb R^{n}$. It is known that $\Ga_\xi$ preserves and acts isometrically on $\mathcal H_\xi(1)$ and in fact by Bieberbach's theorem it acts cocompactly on a $k$-dimensional Euclidean subspace $\mathbb E_\xi^k$ of $\mathbb R^{n}\cong \mathcal H_\xi(1)$. It follows that $\Ga_\xi$ has a finite index abelian normal subgroup $Z_\xi$ which acts on $\mathbb E_\xi^k$ by translations. We call $k$ the \emph{rank} of the cusp at $\xi$. For the purpose of this paper, we will only consider the full rank case $k=n$, and from now on all cusps are assumed to be full rank.

Under the Langlands decomposition, the real parabolic subgroup decomposes as $P_\xi=N_\xi A_\xi K_\xi$, where $A_\xi$ is the maximal abelian subgroup which acts by translation on the geodesic connecting $O$ and $\xi$. Geometrically, $K_{\xi}\cong \SO(n)$ and $N_{\xi}\cong \mathbb R^{n}$ are the rotations and translations on  $\mathcal H_\xi(1)$ respectively. Denote $K_{\xi}N_{\xi}=N_\xi K_\xi$ by  $P_{\xi}(1)$.  Then it is indeed the orientation preserving isometry group of $\mathcal H_\xi(1)$, so $\Ga_\xi$ is a discrete subgroup in $P_\xi(1)$. We say the cusp is \emph{toric} if $\Ga_\xi$ is isomorphic to $\mathbb Z^n$ since under the quotient the cusp is homeomorphic to $\mathbb T^n\times [0,\infty)$.

For each cusp, we lift to the universal cover and choose an arbitrary parabolic fixed point $\xi\in \partial_{\infty}\H^{n+1}$ representing the cusp. For the convenience, we will describe the following construction under the upper-half plane model $\mathbb H^{n+1}=\{(y,x_1,...,x_{n})\in \mathbb R^+\times \mathbb R^{n}\}$. We assume $O=(1,0,...,0)$ and $\xi$ is in the positive infinity of the $y$-axis, thus $\mathcal H_\xi(1)=\{(y,x_1,...,x_n): y=1\}$. For a different $\xi'\in  \partial_{\infty}\H^{n+1}$, the construction differs by a $k(\xi,\xi')$-conjugate where $k(\xi,\xi')\in K$ is any element which sends $\xi$ to $\xi'$.

\subsection{Trivial coefficients}
In the case of trivial representation $V=\R$, we define $\phi_\xi$ to be the canonical volume form on $\mathcal H_\xi(1)$, that is,
\[(\phi_\xi)_x=dx_1\wedge...\wedge dx_n\]
for any $x\in \HH_\xi(1)$. It is convenient (for the purpose of computation) to view $\phi_\xi$ also as a function in $C^\infty\left(P_\xi(1), \Lambda^n\mathfrak n_\xi^*\right)$ according to the discussion in Section \ref{sec:differential forms}. We choose the orthonormal frame $\{u_1,...,u_n\}$ on $\mathfrak n_\xi$ normalized so that each $u_i$ exponentiates to the unit translation on $\HH_\xi(1)\cong \R^{n}$ along the $x_i$-axis. Thus $\phi_\xi$ satisfies,
\begin{enumerate}
	\item $\phi_\xi(n)=\phi_\xi(1)=(u_1^*\wedge...\wedge u_n^*)$, for all $n\in N_\xi$,
	\item $\phi_\xi(pm)=\Ad_{\mathfrak n_\xi}^*(m^{-1})(\phi_\xi(p))$ for all $m\in K_\xi$ and $p\in P_\xi(1)$,
\end{enumerate}
where $u_i^*\in \mathfrak n_\xi$ is the dual vector of $u_i$. We wish to extend the differential form to the entire $\H^{n+1}$, or equivalently, extend $\phi_\xi\in C^\infty\left(P_\xi(1), \Lambda^n\mathfrak n_\xi^*\right)$ to a function in $C^\infty\left(P_\xi, \Lambda^n(\mathfrak a_\xi^* \oplus \mathfrak n_\xi^*)\right)$. Following \cite{Harder}, we introduce the following \emph{degree $s$} extension $\phi_{\xi,s}$ of $\phi_\xi$. Let $t_\xi: A_\xi\rightarrow \R$ be the character that corresponds to the positive root on $\mathfrak a_\xi$, that is, for any $a\in A_\xi$ and $v\in \mathfrak g$, we have
	\begin{equation}\label{eq:character}
	\Ad(a)v= \begin{cases}
		t_{\xi}^2(a)v &\text{if } v\in \mathfrak g_2\\
		0 &\text{if } v\in \mathfrak g_0\\
		t_{\xi}^{-2}(a)v&\text{if } v\in \mathfrak g_{-2}
	\end{cases},
\end{equation}	
where $\mathfrak g=\mathfrak g_2\oplus \mathfrak g_0\oplus\mathfrak g_{-2}$ is the root space decomposition corresponding to $A_\xi$ (take $\xi$ to be the positive direction). Therefore, if $T\in \mathfrak a_\xi$ is the vector such that $[T,u]=2u$ for all $u\in \mathfrak n_\xi$, then we have $(dt_\xi)_a(T)=t_\xi(a)$, or that $dt_\xi/t_\xi$ is dual to $T$. We also write $t_{\xi, a}$ for $t_\xi(a)$ when convenient, and omit the subscript $\xi$ when there is no confusion in the context. Now we define the degree $s$ extension $\phi_{\xi, s}$ of $\phi_\xi$ by

\begin{enumerate}
	\item $\phi_{\xi,s}(n)=\phi_\xi(1)=(u_1^*\wedge...\wedge u_n^*)$, for all $n\in N_\xi$,
	\item $\phi_{\xi,s}(p)=\Ad_{\mathfrak n_\xi}^*(m^{-1})(\phi_\xi(1))t_{\xi,a}^s$, for all $p=nam\in N_\xi A_\xi K_\xi$ under the unique Langlands decomposition.
\end{enumerate}
Thus in view of Section \ref{sec:differential forms}, $\phi_{\xi, s}\in C^\infty\left(P_\xi, \Lambda^n(\mathfrak a_\xi^* \oplus \mathfrak n_\xi^*)\right)$ represents an $N_\xi$-invariant differential form on $\H^{n+1}$.

\begin{prop}
\label{trivialclosed}
	In case of trivial representation, $\phi_{\xi,s}$ is closed when $s=2n$.
\end{prop}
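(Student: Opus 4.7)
The plan is to verify closedness directly from the coboundary formula given in Section \ref{sec:differential forms}, treating $\phi_{\xi,s}$ as a function on $P_\xi$ and exploiting its explicit description on the Langlands factors. Since $\phi_{\xi,s}$ is left $N_\xi$-invariant and transforms equivariantly under $K_\xi$, Lemma \ref{lem:commute} transfers these invariances to $d\phi_{\xi,s}$, so it suffices to check closedness at a single point $a \in A_\xi$ on the basis $\{T, u_1, \ldots, u_n\}$ of $\mathfrak a_\xi \oplus \mathfrak n_\xi$.

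The key observation narrowing the computation is that $\phi_{\xi,s}(p) \in \Lambda^n \mathfrak n_\xi^*$, so $\phi_{\xi,s}$ automatically vanishes on any $n$-tuple containing a vector from $\mathfrak a_\xi$. Since $d\phi_{\xi,s}$ is an $(n+1)$-form on the $(n+1)$-dimensional space $\mathfrak a_\xi \oplus \mathfrak n_\xi$, the only nontrivial evaluation to check is $d\phi_{\xi,s}(T, u_1, \ldots, u_n)$.

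I would split this into the two sums of the coboundary formula. In the derivative sum, every term except $T \cdot \phi_{\xi,s}(u_1, \ldots, u_n)$ requires evaluating $\phi_{\xi,s}$ on a tuple involving $T$, hence vanishes; at $a \in A_\xi$ one has $\phi_{\xi,s}(u_1, \ldots, u_n) = t_{\xi,a}^s$, and the identity $(dt_\xi)_a(T) = t_{\xi,a}$ produces the contribution $s \cdot t_{\xi,a}^s$. In the bracket sum, the abelianness of $\mathfrak n_\xi$ kills all $(i,j)$ with $i,j \geq 2$; each surviving pair $(1,j)$ contributes a term built from $[T, u_{j-1}] = 2 u_{j-1}$, and after combining the coboundary sign $(-1)^{1+j}$ with the sign $(-1)^{j-2}$ of the cyclic permutation that returns $u_{j-1}$ to its natural slot, each term reduces to $-2 \cdot t_{\xi,a}^s$. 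Summing the $n$ such contributions yields $-2n \cdot t_{\xi,a}^s$, and hence $d\phi_{\xi,s}(T, u_1, \ldots, u_n)|_a = (s - 2n)\, t_{\xi,a}^s$, which vanishes precisely when $s = 2n$.

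The only nontrivial bookkeeping, and thus the main obstacle, is tracking signs in the bracket sum: verifying that the $(-1)^{1+j}$ from the coboundary formula together with the permutation sign $(-1)^{j-2}$ collapses to $-1$ uniformly in $j$, so that the contributions from different $j$'s add rather than cancel. Once this is handled, the statement reduces to the one-line identity $s - 2n = 0$.
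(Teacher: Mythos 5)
Your proposal is correct and follows essentially the same route as the paper: the paper defers this to Proposition \ref{prop:adclosed}, which reduces to the single evaluation $d\phi_{\xi,s}(T,u_1,\ldots,u_n)$, gets $s\cdot\phi_{\xi,s}$ from the derivative term and $-2n\cdot\phi_{\xi,s}$ from the bracket term via $[T,u_i]=2u_i$, and concludes $d\phi_{\xi,s}=(s-2n)\,\frac{dt}{t}\wedge\phi_{\xi,s}$. Your sign bookkeeping $(-1)^{1+j}(-1)^{j-2}=-1$ is right, so the two arguments agree term for term.
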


The proof is deferred in Proposition \ref{prop:adclosed}.

\begin{remark}\label{rem:harmonic}
	It is much easier to find this unique closed $N_\xi$-invariant differential form $\phi_{\xi,s}$ when it is viewed as in $\Omega^n(\H^{n+1},\R)^{N_\xi}$, which is simply the pull back form of $\phi_\xi$ under the natural projection $\H^{n+1}\rightarrow \HH_\xi(1)$. In other words, it is the harmonic form
	\[(\phi_{\xi,s})_x=dx_1\wedge...\wedge dx_n\]
	for any $x\in \H^{n+1}$. However, when it comes to the computation of intertwining operators, Treating $\phi_{\xi,s}$ as a function will be more convenient for us.
\end{remark}
\subsection{Adjoint representation} Now we assume $V=\mathfrak g$ and $\rho$ is the adjoint representation. We first define a harmonic form on $\mathcal H_\xi(1)$. By Lemma \ref{lem:lie alg cohomology}, in order to obtain non-trivial cohomology classes, we need to take $V_{-2}$ sections. Let $v_\xi\in V_{-2}$ be an arbitrary non-zero vector. We construct the unique (left) $N_\xi$-invariant $n$-form $\phi_\xi$ on $\mathcal H_\xi(1)$ by setting the initial value
\[(\phi_\xi)_O:=(dx_1\wedge...\wedge dx_n)\otimes v_\xi.\]
Since $v_\xi$ does not commute with $\mathfrak n_\xi$, the $N_\xi$-action will twist the fibre as we vary the point on the horosphere. More precisely, the differential $n$-form $\phi_\xi$ can be globally defined as
\[(\phi_\xi)_{uO}:=(dx_1\wedge...\wedge dx_n)\otimes \rho(u)(v_\xi) ,\quad \forall u\in N_\xi.\]
Alternatively, we can view $\phi_\xi$ as a function in $C^\infty\left(P_\xi(1), \Hom(\Lambda^n\mathfrak n_\xi, V)\right)$ which satisfies,
\begin{enumerate}
	\item $\phi_\xi(n)=\phi_\xi(1)=(u_1^*\wedge...\wedge u_n^*)\otimes v_\xi $, for all $n\in N_\xi$,
	\item $\phi_\xi(pm)=\Ad_{\mathfrak n_\xi}^*(m^{-1})\otimes\rho(m^{-1})(\phi_\xi(p))$ for all $m\in K_\xi$ and $p\in P_\xi(1)$,
\end{enumerate}
where again $u_i\in \mathfrak u_\xi$ is the unique vector that exponentiates the unit translation in $x_i$, and $u_i^*$ is its dual vector. Now we define the degree $s$ extension $\phi_{\xi,s}$ of $\phi_\xi$ by
\begin{enumerate}
	\item $\phi_{\xi,s}(n)=\phi_\xi(1)=(u_1^*\wedge...\wedge u_n^*)\otimes v_\xi $, for all $n\in N_\xi$,
	\item $\phi_{\xi,s}(p)=\phi_{\xi,s}(nam)=\Ad_{\mathfrak n_\xi}^*(m^{-1})\otimes\rho(m^{-1})(\phi_\xi(1))t_{\xi,a}^s$, for all $p\in P_\xi$.
\end{enumerate}
Thus $\phi_{\xi,s}\in C^\infty\left(P_\xi, \Hom(\Lambda^n\mathfrak p_\xi, V)\right)$ represents an $N_\xi$-invariant $V$-valued differential form on $\H^{n+1}$, where $\mathfrak p_\xi$ is the Lie algebra of $P_\xi$. The following proposition computes the differential of $\phi_{\xi,s}$ in the most generality (for later use in Section \ref{sec:proof}), and in particular it characterizes when $\phi_{\xi,s}$ is closed. This is essentially proved in \cite[Lemma 3.1]{Harder}, but for completeness we add it here.

\begin{prop}\label{prop:adclosed}
	Let $\phi_{\xi,s}$ be the degree $s$ extension of $\phi_\xi$ where $\phi_\xi$ is defined as above except now we allow $v_\xi\in V_{\ell}$ for $\ell\in \{-2, 0, 2\}$, and $V$ is either with the trivial representation or with the adjoint representation. Then
	\[d\phi_{\xi, s}=\begin{cases}
		(s-2n+\ell)\cdot  \dfrac{dt_{\xi}}{t_{\xi}}\wedge \phi_{\xi, s} & \text{ if $\rho$ is adjoint representation}\\
		(s-2n) \cdot \dfrac{dt_{\xi}}{t_{\xi}}\wedge \phi_{\xi, s}&\text{ if $\rho$ is trivial representation}
	\end{cases}.
	\]
where $t_{\xi}: A_{\xi}\rightarrow \mathbb{R}$ is the character corresponds to the positive root on $\mathfrak{a}_{\xi}$ defined as in \eqref{eq:character}. 
\end{prop}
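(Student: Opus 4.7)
The plan is to verify the formula by evaluating $d\phi_{\xi,s}$ on basis $(n+1)$-tuples in $\mathfrak{n}_\xi\oplus\mathfrak{a}_\xi$ via the coboundary operator from Section \ref{sec:differential forms}; the isotropy condition $i(Y)\phi_{\xi,s}=0$ for $Y\in\mathfrak{k}_\xi$ allows me to ignore $\mathfrak{k}_\xi$-entries. Since $\dim\mathfrak{n}_\xi=n$ and $\dim\mathfrak{a}_\xi=1$, multilinearity and alternation reduce the check to the single tuple $(T,u_1,\dots,u_n)$, where $T\in\mathfrak{a}_\xi$ is dual to $dt/t$ and $\{u_i\}$ is the chosen frame of $\mathfrak{n}_\xi$. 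Since the right-hand side $(dt/t)\wedge\phi_{\xi,s}$ evaluated on the same tuple equals $\phi_{\xi,s}(u_1,\dots,u_n)$, the whole task reduces to producing the correct scalar factor in front.

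On this tuple I will first observe that $\phi_{\xi,s}(\dots,T,\dots)=0$, since the dual frame $u_1^*\wedge\cdots\wedge u_n^*$ annihilates any argument containing $T$. This kills every singleton-derivative term with index $i\geq 2$, leaving only $(T+\rho(T))\phi_{\xi,s}(u_1,\dots,u_n)$ from $i=1$. For the bracket sum, abelianness of $\mathfrak{n}_\xi$ kills every pair with $i,j\geq 2$; the surviving pairs are $(1,j+1)$ for $j=1,\dots,n$, contributing via $[T,u_j]=2u_j$. After combining the coboundary sign $(-1)^{1+(j+1)}$ with the sign of permuting $u_j$ back into its natural slot, each pair yields $-2\phi_{\xi,s}(u_1,\dots,u_n)$, for a total bracket contribution of $-2n\,\phi_{\xi,s}(u_1,\dots,u_n)$.

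To evaluate the singleton term at $p=nam$, I will use that $T$ commutes with $K_\xi$, so the Langlands decomposition of $p\exp(tT)$ is $(n,a\exp(tT),m)$. The value $\phi_{\xi,s}(p\exp(tT))(u_1,\dots,u_n)$ then picks up the factor $t_{\xi,a}^s e^{st}$ (using also $\det\Ad_{\mathfrak{n}_\xi}(m)=1$ for $m\in K_\xi\cong\SO(n)$), and the left-invariant derivative along $T$ extracts the eigenvalue $s$. For $\rho(T)$: in the trivial case it vanishes; in the adjoint case, $\Ad(m)T=T$ forces $\rho(T)$ to commute with $\rho(m^{-1})$, so $\rho(m^{-1})v_\xi$ remains in $V_\ell$ and $\rho(T)$ acts on it by $\ell$. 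Combining the three contributions yields $d\phi_{\xi,s}(T,u_1,\dots,u_n)=(s+\ell-2n)\,\phi_{\xi,s}(u_1,\dots,u_n)$ in the adjoint case and $(s-2n)\,\phi_{\xi,s}(u_1,\dots,u_n)$ in the trivial case, matching the claimed formula. The main obstacle will be careful sign bookkeeping in the bracket sum, together with confirming that $A_\xi$ and $K_\xi$ commute inside $P_\xi$ so that the Langlands coordinates transform as advertised; no input beyond the eigenvalues of $\ad(T)$ on $V$ is needed.
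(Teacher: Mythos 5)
Your proposal is correct and follows essentially the same route as the paper's proof: reduce to evaluating the coboundary on the single tuple $(T,u_1,\dots,u_n)$, compute the bracket sum to get $-2n$, and extract the eigenvalues $s$ from the $t^s$-factor and $\ell$ from $\rho(T)$. Your extra care with the signs in the bracket sum and with $K_\xi$ commuting with $A_\xi$ only fills in details the paper leaves implicit (citing \cite[(4.10)]{Matsushima-Murakami} for the vanishing of the other terms).
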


\begin{proof} Let $X_1,\cdots, X_{n+1}\in \mathfrak p_\xi= \mathfrak k_\xi\oplus \mathfrak a_\xi\oplus \mathfrak n_\xi$, we need to compute $d\phi_{\xi, s}(X_1,\cdots,X_{n+1})$. Recall that
\begin{align*}
		d\phi_{\xi, s}(X_1, \cdots ,X_{n+1})&=\sum_{i} (-1)^{i+1} (X_i+\rho(X_i))\phi_{\xi, s}(X_1,\cdots,\widehat X_i,\cdots,X_{n+1})\\
		&+\sum_{i<j}(-1)^{i+j}\phi_{\xi, s}([X_i,X_j],X_1,\cdots,\widehat X_i,\cdots, \widehat X_j, \cdots, X_{n+1}).
\end{align*}
The form $\phi_{\xi}$ is a closed $n$-form on the horosphere $\HH_{\xi}(1)$. It suffices to only consider the term $d\phi_{\xi, s}(T, X_{1}, \cdots, X_{n})$ where $T\in \mathfrak a_\xi$ and $X_{i}\in \mathfrak{n}_{\xi}$ since all the other terms vanish. (See \cite[(4.10)]{Matsushima-Murakami}.) We choose $X_i=u_i\in \mathfrak n_\xi$ and normalize $T$ such that $[T,X_i]=2X_i$, i.e. $T$ is dual to $dt_{\xi}/t_{\xi}$. Then at any point $p=na\in N_\xi A_\xi$, we have
\begin{align*}
	d\phi_{\xi, s}(T, X_{1},\cdots, X_{n})&= (T+\rho(T))\phi_{\xi, s}(X_1,\cdots, X_{n})\\
	&+\sum_{i}(-1)^{i}\phi_{\xi, s}([T,X_i],X_1,\cdots,\widehat X_i, \cdots, X_{n}).
\end{align*}
The second term can be simplified as
\begin{align*}
	\sum_{i}(-1)^{i}\phi_{\xi, s}([T,X_i],X_1,\cdots,\widehat X_i, \cdots, X_{n})&=-2\sum_{i=1}^{n}\phi_{\xi, s}(X_{1}, \cdots, X_{i}, \cdots, X_{n})\\
	&=-2n\cdot \phi_{\xi, s}(X_1, \cdots, X_n).
\end{align*}
Furthermore, we compute
$$\rho(T)\phi_{\xi, s}(X_{1}, \cdots, X_{n})=\begin{cases}
	\ell\cdot \phi_{\xi, s}(X_1, \cdots, X_n) & \text{ if $\rho$ is adjoint representation}\\
     0&\text{ if $\rho$ is trivial representation}
\end{cases}$$
and
$$T\phi_{\xi, s}(X_{1}, \cdots, X_{n})=s\cdot \phi_{\xi, s}(X_1, \cdots, X_n).$$
Therefore,
$$d\phi_{\xi, s}=\begin{cases}
	(s-2n+\ell)\cdot  \dfrac{dt_{\xi}}{t_{\xi}}\wedge \phi_{\xi, s} & \text{ if $\rho$ is adjoint representation}\\
	(s-2n) \cdot \dfrac{dt_{\xi}}{t_{\xi}}\wedge \phi_{\xi, s}&\text{ if $\rho$ is trivial representation}
\end{cases}.
$$
\end{proof}

\begin{cor}\label{cor:closed}
In case of adjoint representation and $v_\xi\in V_{-2}$, $\phi_{\xi,s}$ is closed when $s=2n+2$.	
\end{cor}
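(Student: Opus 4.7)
The plan is to deduce this corollary directly from Proposition \ref{prop:adclosed}, which has already done all the heavy computational work. There is essentially no new content here beyond substitution.

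First I would note that we are in the adjoint representation case, so by Proposition \ref{prop:adclosed} we have
\[
d\phi_{\xi,s} = (s - 2n + \ell) \cdot \frac{dt}{t} \wedge \phi_{\xi,s},
\]
where $\ell$ is the root weight of the vector $v_\xi \in V_\ell$ used to define $\phi_\xi$. By hypothesis $v_\xi \in V_{-2}$, so $\ell = -2$, and the formula becomes
\[
d\phi_{\xi,s} = (s - 2n - 2) \cdot \frac{dt}{t} \wedge \phi_{\xi,s}.
\]

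Setting the scalar coefficient equal to zero gives $s = 2n + 2$, at which value the right-hand side vanishes identically and hence $d\phi_{\xi,s} = 0$. This is the claim.

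There is no real obstacle here — the only thing worth flagging is that one should verify that $\phi_{\xi,s}$ is not identically zero at $s = 2n+2$ (so that the conclusion is nontrivial), but this is clear from the defining formula $\phi_{\xi,s}(nam) = \Ad_{\mathfrak n_\xi}^*(m^{-1}) \otimes \rho(m^{-1})(\phi_\xi(1)) \cdot t_{\xi,a}^s$, since $t_{\xi,a}^s > 0$ and $\phi_\xi(1) = (u_1^* \wedge \cdots \wedge u_n^*) \otimes v_\xi \neq 0$.
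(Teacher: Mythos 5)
Your proposal is correct and is exactly the argument the paper intends: Corollary \ref{cor:closed} is stated as an immediate consequence of Proposition \ref{prop:adclosed}, obtained by substituting $\ell=-2$ so that the coefficient $s-2n+\ell$ vanishes precisely at $s=2n+2$. The extra remark that $\phi_{\xi,s}$ is nonzero is a harmless bonus.
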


\subsection{Non-toric cusps}\label{sec:nontoric}
In the case of toric cusps, we know $\Ga_\xi<N_\xi$, hence the above constructed differential form $\phi_\xi$ (hence also its extension $\phi_{\xi,s}$) is automatically $\Ga_\xi$-invariant, and by the Van-Est isomorphism (Theorem \ref{thm:vanEst}) every cohomology class in $H^n(\Ga_\xi,\Ad)$ can be represented this way. For a general non-toric cusp, $\Ga_\xi$ is a Bieberbach group. With trivial coefficient, we still have $H^n(\Ga_\xi,\R)\cong H^n(Z_\xi,\R)\cong H^n(\mathfrak u,\R)\cong \R$, and the canonical volume form is $\Ga_\xi$-invariant. 

However, with adjoint representation it is less clear. Since $Z_\xi$ is a finite index normal subgroup of $\Ga_\xi$, the transfer map $\iota :H^k(Z_\xi,\Ad)\rightarrow H^k(\Ga_\xi,\Ad)$, given by taking the average over the finite group action of $Z_\xi\backslash\Ga_\xi$, is a left inverse of the restriction map $i^*$. That is, the following composition
\[H^k(\Ga_\xi,\Ad)\xrightarrow{i^*} H^k(Z_\xi,\Ad)\xrightarrow{\iota} H^k(\Ga_\xi,\Ad)\]
is the identity map. In particular, $i^*$ is injective and $\iota$ is surjective. Therefore, we can identify cohomology classes of $H^k(\Ga_\xi,\Ad)$ with their images in $H^k(Z_\xi,\Ad)$. 

\begin{prop}\label{prop:nontoric}
	For any cohomology class $\alpha\in H^n(\Ga_\xi,\Ad)$, there is a unique  $N_\xi$-invariant, $\Ga_\xi$-invariant closed differential form $\overline{\psi_\alpha}\in C^\infty\left(P_\xi(1), \Hom(\Lambda^n\mathfrak n_\xi, V)\right)$ representing $\alpha$ whose initial value satisfies,
	\[\overline{\psi_\alpha}(1)=(u_1^*\wedge\dots \wedge u_n^*)\otimes v\]
	for some $v\in V_{-2}$. Moreover, $\Theta v$ must be fixed by $\Ga_\xi$ under the adjoint action where $\Theta: \mathfrak{g}\rightarrow \mathfrak{g}$ is the Cartan involution associated with the base point $O$. Thus, $H^n(\Ga_\xi,\Ad)\neq0$ if and only if $\Ga_\xi$ fixes a nontrivial vector in $\mathfrak n_\xi$.
\end{prop}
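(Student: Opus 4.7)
The plan is to exploit the identification, established just before the proposition, of $H^n(\Ga_\xi,\Ad)$ with its image under the restriction $i^*: H^n(\Ga_\xi,\Ad)\hookrightarrow H^n(Z_\xi,\Ad)$. By Van--Est (Theorem \ref{thm:vanEst}) together with Lemma \ref{lem:lie alg cohomology}, there is a natural isomorphism
\[H^n(Z_\xi,\Ad)\;\cong\;H^n(\mathfrak n_\xi,V_{\rho_0})\;\cong\;V_{-2}.\]
So I would first let $v\in V_{-2}$ be the unique vector corresponding to $i^*\alpha$ under this chain of isomorphisms.

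With this $v$ in hand, I would define $\overline{\psi_\alpha}$ on $P_\xi(1)=N_\xi K_\xi$ exactly as in the toric construction of Section \ref{sec:nform}: the unique smooth function on $P_\xi(1)$ that is left $N_\xi$-invariant with initial value $(u_1^*\wedge\dots\wedge u_n^*)\otimes v$ and satisfies the right $K_\xi$-equivariance
\[\overline{\psi_\alpha}(pk)=\Ad_{\mathfrak n_\xi}^*(k^{-1})\otimes \rho(k^{-1})(\overline{\psi_\alpha}(p)).\]
As an $n$-form on the $n$-dimensional horosphere $\HH_\xi(1)$, it is closed for dimension reasons, and by construction its restriction to $Z_\xi$ represents $i^*\alpha$, hence it represents $\alpha$ via the identification. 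Uniqueness within the stated class is then immediate: $N_\xi$-invariance and $K_\xi$-equivariance force the function to be determined by its value at $1$, and that value is pinned down by $i^*\alpha$ once one insists on $V_{-2}$-initial value (Lemma \ref{lem:lie alg cohomology} ensures any $V_0\oplus V_2$ component is cohomologically trivial).

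The main step is verifying $\Ga_\xi$-invariance. For $\gamma\in\Ga_\xi$, write $\gamma=n_0k_0$ with $n_0\in N_\xi$ and $k_0\in K_\xi$. Combining left $N_\xi$-invariance with the fact that $K_\xi$ normalizes $N_\xi$, the condition $\overline{\psi_\alpha}(\gamma g)=\overline{\psi_\alpha}(g)$ reduces to
\[\Ad_{\mathfrak n_\xi}^*(k_0^{-1})\otimes \rho(k_0^{-1})\bigl((u_1^*\wedge\dots\wedge u_n^*)\otimes v\bigr)=(u_1^*\wedge\dots\wedge u_n^*)\otimes v.\]
Since $K_\xi\cong\SO(n)$ acts by rotations on $\mathfrak n_\xi\cong\R^n$, it preserves $u_1^*\wedge\dots\wedge u_n^*$, so the requirement collapses to $\rho(k_0^{-1})v=v$. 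This is automatic because $i^*\alpha\in H^n(Z_\xi,\Ad)^F$ for $F=\Ga_\xi/Z_\xi$ (the image of any restriction from an ambient group is invariant under the induced action of the quotient), and the displayed isomorphism $V_{-2}\cong H^n(Z_\xi,\Ad)$ intertwines the adjoint action of $F$ via the rotation parts on $V_{-2}$ with the natural $F$-action on cohomology.

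For the final statement about $\Theta v$: the Cartan involution $\Theta$ restricts to a $K_\xi$-equivariant linear isomorphism $V_{-2}\to V_2=\mathfrak n_\xi$, so $v$ is fixed by the adjoint action of every rotation part $k_0$ of $\Ga_\xi$ if and only if $\Theta v$ is. Because $\mathfrak n_\xi$ is abelian, $\Ad(n)$ is trivial on $V_2$ for all $n\in N_\xi$, so this is equivalent to $\Theta v$ being fixed by all of $\Ga_\xi$ under the adjoint action. The main obstacle I expect is making precise the $F$-equivariance of the composition Van--Est $\circ\, J$, i.e.\ tracking carefully how conjugation by $\gamma$ on $Z_\xi$-cocycles corresponds to $\Ad(k_0)$ on the initial-value vector, modulo the $V_0\oplus V_2$-coboundaries identified in the proof of Lemma \ref{lem:lie alg cohomology}; once this is in place, the rest of the proof is essentially a repackaging of the toric construction.
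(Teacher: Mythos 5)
Your proposal is correct, but it reaches the conclusion by a different mechanism than the paper. The paper's proof is constructive via the transfer map: it picks a $V_{-2}$-valued harmonic representative $\psi_\alpha$ of $i^*\alpha$ at the level of $Z_\xi$ and then \emph{averages} it over the finite group $Z_\xi\backslash\Ga_\xi$; $\Ga_\xi$-invariance of $\overline{\psi_\alpha}$ is then automatic, the $V_{-2}$ statement follows from a direct computation of $\overline{\psi_\alpha}(1)=(u_1^*\wedge\dots\wedge u_n^*)\otimes\bigl(\tfrac1N\sum_i\rho(m_i^{-1})v_\alpha\bigr)$ (using that $K_\xi$ preserves the volume form and commutes with $A_\xi$), and the fixedness of $v$, hence of $\Theta v$, comes from the rotation parts $\{m_i\}$ forming a group. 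You instead start from the abstract fact that $\mathrm{im}(i^*)\subset H^n(Z_\xi,\Ad)^F$ with $F=\Ga_\xi/Z_\xi$, pull the $F$-invariance back through the Van--Est and Lie-algebra-cohomology identifications to conclude $\rho(k_0)v=v$ directly, and then build the form by extension from that invariant initial vector. Both routes ultimately realize $H^n(\Ga_\xi,\Ad)\cong\bigl(H^n(Z_\xi,\Ad)\bigr)^{F}$ (which the paper's remark after the proposition makes explicit); the averaging argument buys you $\Ga_\xi$-invariance for free at the cost of a short computation of the initial value, while your argument is cleaner conceptually but hinges on the one step you correctly flag: verifying that the identification $H^n(Z_\xi,\Ad)\cong V_{-2}$ intertwines the conjugation action of $\gamma=n_0k_0$ with $\rho(k_0)$ on $V_{-2}$. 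That verification does go through, but it is not free --- you must observe that $\rho(n_0)v-v\in V_0\oplus V_2$ for $v\in V_{-2}$ (since $\ad$ of $\mathfrak n_\xi=V_2$ raises the grading) and that $V_0\oplus V_2$ is exactly the space of coboundaries by Lemma \ref{lem:lie alg cohomology}, so the translation part acts trivially on cohomology; without that observation the claimed equivariance would be unjustified. Your treatment of the $\Theta v$ statement (using that $\Theta$ is a $K_\xi$-equivariant isomorphism $V_{-2}\to\mathfrak n_\xi$ and that $\mathfrak n_\xi$ is abelian) matches the paper's.
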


\begin{proof}
	We first choose the harmonic representative of $i^*(\alpha)$ in $H^n(Z_\xi,\Ad)\cong H^n(\mathfrak n_\xi,\ad)$ under the Van-Est isomorphism, and we denote it by $\psi_\alpha$. Then by Lemma \ref{lem:lie alg cohomology}, we can choose an $N_\xi$-invariant form $\psi_\alpha\in C^\infty\left(P_\xi(1), \Hom(\Lambda^n\mathfrak n_\xi, V)\right)$ such that $\psi_\alpha(1)=(u_1^*\wedge\dots \wedge u_n^*)\otimes v_\alpha$ for some $v_\alpha\in V_{-2}$. Since $\psi_{\alpha}$ is a top form, it is closed. We denote $\overline{\psi_\alpha}$ the image of $\psi_\alpha$ under the transfer map, hence by definition we have
	\[\overline{\psi_\alpha}(p)=\frac{1}{D}\sum_{[\ga_i]\in Z_\xi\backslash \Ga_\xi}\psi_\alpha (\ga_i \cdot p),\]
	where $D=[\Ga_\xi: Z_\xi]$.
	By Lemma \ref{lem:commute}, $\overline{\psi_\alpha}$ is closed. Clearly it is $\Ga_\xi$-invariant,  and since $\iota\circ i^*=Id$, $\overline{\psi_\alpha}$ represents $\alpha$. To see it is also $N_\xi$-invariant we compute for any $u\in N_\xi$ and $p\in P_\xi(1)$
	\begin{align*}
			\overline{\psi_\alpha}(up)&=\frac{1}D\sum_{[\ga_i]\in Z_\xi\backslash \Ga_\xi}\psi_\alpha (\ga_i \cdot up)\\
			&=\frac{1}D\sum_{[\ga_i]\in Z_\xi\backslash \Ga_\xi}\psi_\alpha (\ga_i u \ga_i^{-1}\cdot \ga_ip)\\
			&=\frac{1}D\sum_{[\ga_i]\in Z_\xi\backslash \Ga_\xi}\psi_\alpha ( \ga_ip)\\
			&=	\overline{\psi_\alpha}(p),
	\end{align*}
where the third equality uses the $N_\xi$-invariance of $\psi_\alpha$ together with the fact that $P_\xi(1)$ normalizes $N_\xi$.

	To compute	$\overline{\psi_\alpha}(1)$ we can write $\ga_i$ uniquely as $n_im_i\in P_\xi(1)$, then by the $N_\xi$-invariance of $\psi_\alpha$, we obtain
	\begin{align*}
	\overline{\psi_\alpha}(1)&=\frac{1}N\sum_{[\ga_i]\in Z_\xi\backslash \Ga_\xi}\psi_\alpha (m_i)\\
	&=\frac{1}N\sum_i \Ad(m_i^{-1})\otimes \rho(m_i^{-1})\left((u_1^*\wedge\dots \wedge u_n^*)\otimes v_\alpha\right)\\
	&=(u_1^*\wedge\dots \wedge u_n^*)\otimes \left(\frac{1}N\sum_i \rho(m_i^{-1})(v_\alpha)\right),
	\end{align*}
where the last equality uses the fact that $K_\xi$ acts isometrically on $\HH_\xi(1)$ so in particular it preserves its volume form. Since $A_\xi$ commutes with $K_\xi$, it is clear that
\[v:=\left(\frac{1}N\sum_i \rho(m_i^{-1})(v_\alpha)\right)\in V_{-2}.\]
The uniqueness of $v$ follows from the injectivity of $i^*$.
Finally, to see why $\Theta v$ is fixed by $\Ga_\xi$, we first note that the collection of $\{m_i\}$ form a group. Thus $\rho(m_i)v=v$ for every $m_i$. Since $\Theta$ fixes the Lie algebra of $K_\xi$, it commutes with $\rho(m_i)$ for all $m_i$. It follows that $\rho(m_i)(\Theta v)=\Theta v$ for all $m_i$.   Since $\Theta v\in \mathfrak n_\xi$,  $\Ga_\xi$ fixes $\Theta v$.
\end{proof}

\begin{remark}\label{rem:non-toric}
	The proposition is an explicit realization of the isomorphism $H^n(\Ga_\xi,\Ad)\cong (H^n(Z_\xi,\Ad))^{\Ga_\xi}$ obtained for example via the spectral sequence. In the example of Kleinian group with infinitely many cusps constructed by Italiano, Martelli and Migliorini \cite{IMM}, they are toric. However, there are in general non-toric examples (See \cite{FKS}) where $\Ga_\xi$ does not fix any nontrivial vector in $\mathfrak n_\xi$, hence $H^n(\Ga_\xi,\Ad)=0$.
\end{remark}
%

\section{construction of Eisenstein series}
\label{sec:eisen}
In Section \ref{sec:nform}, we constructed $\Ga_{\xi}$-invariant V-valued ($V=\R$ or $\mathfrak{g}$) $n$-forms $\phi_{\xi, s}$ on $\H^{n+1}$ out from any full rank parabolic fixed point $\xi\in \partial_\infty\mathbb H^{n+1}$. In this section, we will further construct from each $\phi_{\xi,s}$ a $\Ga$-invariant $n$-form on  $\H^{n+1}$, i.e. an element in $\Omega^{n}(\H^{n+1},  V)^{\Ga}$, by the process of \emph{Eisenstein series}. Since we will need $\Ga$-actions on the differential forms, we want to first extend our definition of $\phi_{\xi,s}$ to a function that supports on the entire $G$, but of course this is uniquely determined because $\phi_{\xi,s}$ is already a form on $\H^{n+1}$. By the discussion in Section \ref{sec:differential forms} and in view of its value on $P_\xi$, the unique extension $\phi_{\xi,s}\in C^\infty\left(G, \Hom(\Lambda^n(\mathfrak a_\xi\oplus \mathfrak n_\xi), V)\right)$ satisfies for any $g\in G$,
\begin{enumerate}
	\item $\phi_{\xi,s}(g)=\phi_{\xi,s}(n\cdot a\cdot k)=\Ad_{\g}^*(k^{-1})\otimes \rho(k^{-1})(\phi_{\xi,s}(n))\cdot t_a^{s}$,
	\item $\phi_{\xi,s}(n)=\phi_{\xi,s}(1)=(u_1^*\wedge \cdots \wedge u_n^*)\otimes v_\xi$,
\end{enumerate}
where $g=nak\in N_\xi A_\xi K$ is the unique Iwasawa decomposition, and $v_\xi$ is any vector in $V_{-2}$ if $V=\g$. From Proposition \ref{prop:nontoric}, we know that any cohomology class in $H^n(\Ga_\xi,\Ad)$ can be represented by the above $\phi_{\xi,s}$ which is (left) $N_\xi$-invariant and $\Ga_\xi$-invariant. We now define 
$$E(\phi_{\xi}, g, s)=\sum_{\ga\in \Ga_{\xi}\backslash \Ga} \phi_{\xi, s}(\ga g)$$
to be the Eisenstein series associated with $\phi_\xi$ (with degree $s$). We will see later there is a unique $s$ such that $E(\phi_{\xi}, g, s)$ is closed so we will not emphasize on the dependence of $s$. By the construction, it is $\Ga$-invariant. But it is unclear whether it converges or not.

\begin{proposition}
\label{prop:convergence}
Let $\xi$ be a full rank parabolic fixed point. The Eisenstein series $E(\phi_{\xi}, g, s)$ absolutely converges if the Poincar\'e series $P_{s/2}(O):=\sum_{\gamma\in \Gamma} e^{-(s/2) d(O, \gamma O)}$ converges for some $O\in \mathbb{H}^{n+1}$. 
\end{proposition}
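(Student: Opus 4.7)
The plan is to establish, for appropriately chosen coset representatives $\gamma\in\Ga_\xi\backslash\Ga$, a pointwise estimate $|\phi_{\xi,s}(\gamma g)|\leq C\cdot e^{-(s/2)d(O,\gamma gO)}$ with a uniform constant $C$; summing over cosets then dominates $E(\phi_\xi,g,s)$ by a constant multiple of $P_{s/2}(O)$.

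The first step is a pointwise computation. From the defining formula $\phi_{\xi,s}(nak)=\Ad_{\mathfrak{g}}^*(k^{-1})\otimes\rho(k^{-1})(\phi_{\xi,s}(1))\cdot t_a^s$ on $G=N_\xi A_\xi K$, together with the compactness of $K=\stab(O)$, one obtains $|\phi_{\xi,s}(h)|\leq C_0\cdot t_{a_h}^s$, where $a_h$ is the $A_\xi$-part of $h$ in the Iwasawa decomposition. In the upper half-space model with $\xi=\infty$ and $O=(1,\vec 0)$, the normalization $[T,u]=2u$ translates to $t_a^2=y_{aO}$ (the vertical coordinate), so $|\phi_{\xi,s}(h)|\leq C_0\cdot y_{hO}^{s/2}$.

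Next, I would apply the Margulis lemma at the full-rank parabolic point $\xi$ to obtain a horoball $\mathcal{H}_\xi$ centered at $\xi$ that is precisely $\Ga_\xi$-invariant in $\Ga$, i.e., $\gamma\mathcal{H}_\xi\cap\mathcal{H}_\xi=\emptyset$ for $\gamma\in\Ga\setminus\Ga_\xi$. Shrinking $\mathcal{H}_\xi$ if necessary so that $O$ lies outside it (which alters $E$ and $P_{s/2}$ only by bounded multiplicative factors), we obtain $y_{\gamma O}\leq Y$ for every $\gamma\in\Ga\setminus\Ga_\xi$, where $Y$ denotes the height of $\partial\mathcal{H}_\xi$; by the $1$-Lipschitz property of the Busemann function, $y_{\gamma gO}\leq Ye^{d(O,gO)}$, a uniform bound denoted $Y'$. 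For each nontrivial coset $[\gamma]$, I would pick the representative $\gamma^*$ whose horizontal projection $x_{\gamma^* gO}\in\R^n$ lies in a fixed bounded fundamental domain for the Bieberbach action of $\Ga_\xi$ on horospheres, so that $|x_{\gamma^* gO}|\leq D$ uniformly.

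With $y:=y_{\gamma^* gO}\leq Y'$ and $|x_{\gamma^* gO}|\leq D$ under uniform control, the hyperbolic distance formula $d(O,(y,x))=\cosh^{-1}((1+y^2+|x|^2)/(2y))$ yields $d(O,\gamma^* gO)=|\log y|+O(1)$. For $y\leq 1$, $d\approx-\log y$ gives $y^{s/2}=e^{(s/2)\log y}\leq Ce^{-(s/2)d(O,\gamma^* gO)}$; for $1<y\leq Y'$, both $y^{s/2}$ and $e^{-(s/2)d(O,\gamma^* gO)}$ are trapped between fixed positive constants, so the same inequality holds (with a possibly larger $C$). Summing over cosets and applying the triangle inequality $d(O,\gamma gO)\geq d(O,\gamma O)-d(O,gO)$ yields
\[E(\phi_\xi,g,s)\leq C\cdot e^{(s/2)d(O,gO)}\cdot P_{s/2}(O)<\infty.\]
The main obstacle is the geometric step: producing the precisely $\Ga_\xi$-invariant horoball and arranging $O$ to lie outside it. This is a standard consequence of the Margulis lemma for discrete $\Ga<\Isom(\H^{n+1})$ with full-rank cusps, though the non-toric case requires the Bieberbach (rather than purely translational) fundamental domain in the choice of representatives $\gamma^*$.
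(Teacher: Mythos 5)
Your proposal is correct and follows essentially the same route as the paper: bound each term by a constant times $t_{a_\gamma}^s=y_{\gamma gO}^{s/2}$ using compactness of $K$, choose coset representatives with bounded horizontal part via cocompactness of $\Ga_\xi$ on the horosphere, control the finitely many (or boundedly high) terms deep in the cusp via the Margulis lemma, and convert $y^{s/2}$ into $e^{-(s/2)d(O,\gamma gO)}$ to dominate the sum by $P_{s/2}$. The only cosmetic difference is that you phrase the comparison as an upper bound on $E$ while the paper phrases it as a lower bound on the Poincar\'e series; the estimates are identical.
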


\begin{proof}
Fix any $g\in G$, by the Iwasawa decomposition, we have
$\ga g=n_{\ga}a_{\ga}k_{\ga}\in N_{\xi}A_{\xi}K$ where $K$ is the stabilizer of $O$. We can write each term 
$$\phi_{\xi, s}(\ga g)=\phi_{\xi, s}(n_{\ga}a_{\ga}k_{\ga})=\Ad^{\ast}_{\g}(k_{\ga}^{-1})\otimes \rho(k_{\ga}^{-1}) \phi_{\xi}(1) \cdot t_{\ga}^{s},$$
where we set $t_\ga=t(a_\ga)$. If we fix any norm on $\Hom(\Lambda^n(\mathfrak a_\xi\oplus \mathfrak n_\xi), V)$ then we can estimate
$$||E(\phi_{\xi}, g, s)||\leq \sum_{\ga\in \Ga_{\xi}\backslash \Ga} ||\phi_{\xi, s}(\ga g)||=\sum_{\ga\in \Ga_{\xi}\backslash\Ga}||\Ad^{\ast}_{\mathfrak{g}}(k_{\ga}^{-1})\otimes \rho(k_{\ga}^{-1}) \phi_{\xi}(1)|| \cdot t_{\ga}^{s}.$$
Since $K$ is compact, $\Ad^{\ast}_{\g}(k_{\ga}^{-1})\otimes \rho(k_{\ga}^{-1}) \phi_{\xi}(1)$ is uniformly bounded in $\Lambda^n\mathfrak{g}^{\ast}\otimes V$ hence also uniformly bounded when projected onto $\Lambda^n(\mathfrak a_\xi^*\oplus \mathfrak n_\xi^*)\otimes V$. So for the absolute convergence of the Eisenstein series, it suffices to consider the series
$$\sum_{\ga\in \Ga_{\xi}\backslash \Ga} t_{\ga}^{s}$$
and show its convergence. Note that the above series is well-defined since $t_\ga$ does not depend on the choice of $\ga$ in the coset $\Ga_\xi\backslash \Ga$. Indeed, if $\ga_0\in \Ga_\xi=\Ga\cap P_\xi$ is any element, then $\ga_0\in N_\xi K_\xi$ so that we can write $\ga_0=n_0m_0$. Since $m_0$ commutes with $a_\ga$ and normalizes $N_\xi$, we have $\ga_0\ga g=n_0m_0n_\ga a_\ga k_\ga=n_0(m_0n_\ga m_0^{-1})a_\ga m_0k_\ga$, which gives rises to the $N_\xi A_\xi K$--Iwasawa decomposition. This shows that the $A_\xi$ component does not change when replacing $\ga$ by $\ga_0\ga$, thus $t_\ga$ is independent on the choice of $\ga$ in the coset.

Next, we want to relate the series to the Poincar\'e series. Note that since $\xi $ is a full rank parabolic fixed point, $\Ga_\xi$ acts cocompactly on $\HH_\xi(1)$, so there exists a constant $C$ such that a fundamental domain of $\Ga_\xi\backslash \HH_\xi(1)$ is contained in the metric ball $B(O,C/2)$. Thus for any coset $\Ga_\xi\ga$, there exists a representative $\bar \ga\in \Ga_\xi\ga$ such that under the Iwasawa decomposition $\bar{\ga} g=n_\ga a_\ga k_\ga$, the unipotent component $n_\ga$ translates $O$ at most $C$, i.e. $d(n_\ga O, O)\leq C$. Now we can estimate the Poincar\'e series
\begin{align*}
P_{s}(gO)=\sum_{\ga\in \Ga}e^{-sd(O, \ga g O)}&\geq \sum_{[\bar \gamma]\in \Ga_{\xi}\backslash \Ga}e^{-sd(O,  n_{\ga} a_{\ga}O)}\\
&\geq \sum_{[\bar \gamma]\in \Ga_{\xi}\backslash \Ga}e^{-s(d(O,  n_\ga O)+d(n_\ga O, n_{\ga}a_{\ga}O))}\\
&\geq e^{-sC} \sum_{[\bar \ga]\in \Ga_{\xi}\backslash \Ga} e^{-s d(O, a_{\ga}O)},
\end{align*}
where the first inequality makes the particular choice of $\bar \ga$ described above, the second inequality uses the triangle inequality, and the last inequality uses the left invariance of the metric.

Observe that $a_\ga O$ is on the geodesic connecting $O$ and $\xi$. By hyperbolic geometric computation,  $d(O, a_{\ga}O)=|\ln t^{2}_{\ga}|$, and $t_\ga>1$ if and only if $a_\ga O$ lies inside the horoball of $\HH_\xi(1)$. In fact, the computation can be carried out within a totally geodesic copy $\mathbb H^2\subset \mathbb H^{n+1}$ which contains $\xi, O$ where $a_\ga$ acts by isometry. Then without loss of generality, we can use the upper half plane model and assume $O=i$, $\xi=i\infty$, and $a_\ga=\operatorname{diag}(a,a^{-1})\in \SL_2(\mathbb R)$. The mobius transformation gives $a_\ga i=a^2i$, hence $d(O,a_\ga O)=d(i,a^2i)=\ln|a^2|$. On the other hand, from the definition of the character \eqref{eq:character}, we know that $t_\ga=t(a_\ga)=a$ by the following matrix computation
\[\left(\begin{array}{cc}
	a &  \\
	& a^{-1}
\end{array}\right)\left(\begin{array}{cc}
0 & 1 \\
0 & 0
\end{array}
\right)\left(\begin{array}{cc}
	a &  \\
	& a^{-1}
\end{array}
\right)^{-1}=a^2\left(\begin{array}{cc}
	0 & 1 \\
	0 & 0
\end{array}\right).
\]

By the geometry of cusps (essentially the Margulis lemma), we know that there are only finitely many $\Ga_\xi\backslash \Ga$-orbits that lie inside any horoball $B(\xi)$ at $\xi$, that is, the cardinality of $\Ga_\xi\backslash \Ga O\cap B(\xi)$ (which makes sense since $B(\xi)$ is $\Ga_\xi$-invariant) is always finite. So are finitely many $\Ga_\xi\backslash \Ga g$-orbits ($\Ga_\xi\backslash \Ga gO\cap B(\xi)$) according to triangle inequality. Hence, up to a finite error, we have
$$P_{s}(gO)\geq e^{-sC} \sum_{[\ga]\in \Ga_{\xi}\backslash \Ga} e^{2s\ln t_\ga}= e^{-sC} \sum_{[\ga]\in \Ga_{\xi}\backslash \Ga} t_\ga^{2s}.$$
Note that $d(O,a_\ga O)=-2\ln(t_\ga)$ if and only if $t_\ga\leq 1$,  which holds for all but finitely many $[\ga]\in \Ga_\xi\backslash \Ga$, since all but finitely many $[\ga] gO$ lie outside the horoball of $\mathcal H_\xi(1)$. Hence up to passing finite many terms, the above inequality holds.

Note that the convergence of the Poincar\'e series doesn't depend on the basepoint. Thus, if $P_{s/2}(O)$ converges, then the Eisenstein series $E(\phi_{\xi}, g, s)$ converges.
\end{proof}

In view of Proposition \ref{prop:adclosed} and Corollary \ref{cor:closed} we obtain

\begin{corollary}\label{coro:closed1}
Let $\xi\in \H^{n+1}$ be any full rank parabolic fixed point.
\begin{enumerate}
	\item If $V=\mathfrak g$ is the adjoint representation, then the Eisenstein series $E(\phi_{\xi}, g, 2n+2)$ is always an absolutely convergent closed form.
	\item If $V=\R$ is the trivial representation, and in addition if either $\Ga$ is of convergence type or $\delta(\Ga)<n$, then the Eisenstein series $E(\phi_{\xi}, g, 2n)$ is an absolutely convergent closed form.
\end{enumerate}

\end{corollary}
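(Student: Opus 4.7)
The statement splits into two assertions, absolute convergence and closedness, which I would treat in that order.

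For absolute convergence, the plan is to reduce directly to Proposition \ref{prop:convergence}, which says that $E(\phi_\xi, g, s)$ converges absolutely whenever the Poincar\'e series $P_{s/2}(O)$ converges. In case (1) with $s = 2n+2$, the exponent is $s/2 = n+1$, and since any discrete subgroup of $\Isom(\H^{n+1})$ has critical exponent $\delta \le n$, we automatically have $P_{n+1}(O) < \infty$. In case (2) with $s = 2n$, the exponent is $s/2 = n$; if $\delta < n$ this is immediate from the definition of $\delta$, while if $\Ga$ is of convergence type then $P_\delta(O) < \infty$ and the bound $\delta \le n$ yields $P_n(O) \le P_\delta(O) < \infty$.

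For closedness, I would first observe that every summand $\phi_{\xi,s}(\ga g) = (L_\ga \phi_{\xi,s})(g)$ is closed: Corollary \ref{cor:closed} (adjoint case, $s = 2n+2$) and Proposition \ref{trivialclosed} (trivial case, $s = 2n$) say that $\phi_{\xi,s}$ itself is closed, and Lemma \ref{lem:commute} then guarantees that every left translate $L_\ga \phi_{\xi,s}$ is also closed. To conclude that the limit $E(\phi_\xi, g, s)$ is closed, I need to justify interchanging $d$ with the infinite sum.

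The interchange is the only step that requires genuine estimation, and I expect it to be the main obstacle. The plan is to upgrade the convergence in Proposition \ref{prop:convergence} to convergence in the $C^1$-topology on compact subsets of $G$. Applying a left-invariant vector field $X$ to $\phi_{\xi,s}(\ga g)$ amounts to Iwasawa-decomposing $\ga g \cdot \exp(tX)$ and differentiating in $t$; the perturbations of the $A$- and $K$-components depend only on $g$ and $X$, not on $\ga$, and the new $t$-factor is $t_\ga \cdot t_{a(t)}$. This yields a bound $\|X\phi_{\xi,s}(\ga g)\| \le C(g,X)\, t_\ga^s$ with $C(g,X)$ locally bounded in $g$. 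Summing over $\ga \in \Ga_\xi\backslash \Ga$ and invoking the same Poincar\'e-series comparison as in Proposition \ref{prop:convergence} then gives locally uniform convergence of the termwise derivatives, which licenses the interchange and produces $dE(\phi_\xi, g, s) = \sum_\ga d\phi_{\xi,s}(\ga g) = 0$.
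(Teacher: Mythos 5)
Your proof is correct and follows essentially the same route as the paper: absolute convergence is reduced to Proposition \ref{prop:convergence} (using the standard bound $\delta\le n$ for discrete subgroups of $\Isom(\H^{n+1})$), and closedness comes from differentiating term by term via Lemma \ref{lem:commute} together with Proposition \ref{prop:adclosed} and Corollary \ref{cor:closed}. The only difference is that you explicitly justify interchanging $d$ with the infinite sum through a locally uniform $C^1$ estimate, whereas the paper simply asserts the interchange; your added step is a correct refinement of the same argument rather than a different approach.
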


\begin{proof}
The absolute convergence follows from Proposition \ref{prop:convergence}. To see it is closed, we have
$$d E(\phi_{\xi}, g, s)=d \left(\sum_{[\ga]\in \Ga_{\xi}\backslash \Ga} \phi_{\xi, s}(\ga g)\right)=\sum_{[\ga]\in \Ga_{\xi}\backslash \Ga} d \phi_{\xi, s}(\ga g)=0.$$
Here $\phi_{\xi, s}$ is closed by Corollary \ref{cor:closed} if $V=\mathfrak{g}$. If $V=\R$, $\phi_{\xi, s}$ is automatically closed since it is a top form. The absolute convergence of both series ensures the interchanging of the differential operator $d$ with the infinite sum in the second equality. The last equality follows from Lemma \ref{lem:commute}. 
\end{proof}

 
\section{Intertwining operators}
\label{sec:intertwining}

In this section, we assume that all the cusps are full rank, and $V=\R$ or $\mathfrak{g}$. Let $E(\phi_{\xi'}, g, s)$ be the Eisenstein series corresponding to the parabolic fixed point $\xi'$. In order to see which cohomology class the Eisenstein series $E(\phi_{\xi'}, g, s)$ restricts to in $H^n(\Ga_\xi,V)\cong H^n(\mathfrak u_\xi, V)$, we need to look at the image of $E(\phi_{\xi'}, g, s)$ under the map $I_2\circ r_1$ (See the following commutative diagram), which is equivalent to trace along $r_2\circ I_1$.

\[
\begin{tikzcd}
	\Omega^{\ast}(\H^{n+1}, V)^{\Ga} \arrow{r}{I_1} \arrow[swap]{d}{r_1} & \Omega^{\ast}(\H^{n+1}, V)^{N_{\xi}} \arrow{d}{r_2} \\%
	\Omega^{\ast}(\HH_{\xi}(1), V)^{\Ga_{\xi}} \arrow{r}{I_2}& \Omega^{\ast}(\HH_{\xi}(1), V)^{N_{\xi}}
\end{tikzcd}
\]

To obtain the image under $I_1$, we define the intertwining operator from $\xi'$ to $\xi$ as
$$E^{\xi}(\phi_{\xi'}, g, s)=\int_{u\in  \Gamma_{\xi} \backslash N_{\xi}} E(\phi_{\xi'}, ug, s) du,$$
where $du$ is the Haar measure normalized such that $\vol(\Ga_\xi\backslash N_\xi)=1$. Then by restricting the integral to the horosphere $\HH_{\xi}(1)$, we obtain an element in $\Omega^{\ast}(\HH_{\xi}(1), V)^{N_{\xi}}$ which is exactly the image of $E(\phi_{\xi'}, g, s)$ under $r_2\circ I_1$. The main goal of this section is to compute explicitly $E^{\xi}(\phi_{\xi'},g,s)$. We will follow the general approach in \cite[Chapter II]{HarishChandra}. For the convenience, we introduce the following notation: for any $g, h\in G$, we write $^{g}h$ to denote the conjugate of $h$ by $g$, that is, $\tensor[^{g}]{h}{}=ghg^{-1}$. We will need the following two lemmas.

\begin{lemma}
\label{t-part} For any $g\in G$ and $a\in A_{\xi}$, one has 
$$\phi_{\xi, s}(ag)=t_a^{s}\cdot \phi_{\xi, s}(g).$$

\end{lemma}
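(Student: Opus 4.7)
The plan is to unwind the definition of $\phi_{\xi,s}$ on the Iwasawa decomposition of $ag$ and reduce to the multiplicativity of the character $t$.

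First, write $g$ in its Iwasawa form $g = n_0 a_0 k_0$ with $n_0\in N_\xi$, $a_0\in A_\xi$, $k_0\in K$. Since $A_\xi$ normalizes $N_\xi$, the element $n_0' := a n_0 a^{-1}$ still lies in $N_\xi$, so
\[
ag \;=\; a\,n_0\,a_0\,k_0 \;=\; (a n_0 a^{-1})\,(a a_0)\,k_0 \;=\; n_0'\,(a a_0)\,k_0
\]
is the Iwasawa decomposition of $ag$. Applying the defining formula (1) of $\phi_{\xi,s}$ to this decomposition gives
\[
\phi_{\xi,s}(ag) \;=\; \Ad_{\mathfrak g}^*(k_0^{-1}) \otimes \rho(k_0^{-1})\bigl(\phi_{\xi,s}(n_0')\bigr)\cdot t_{aa_0}^{\,s}.
\]

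Second, invoke (2): $\phi_{\xi,s}$ is constant on all of $N_\xi$, so $\phi_{\xi,s}(n_0') = \phi_{\xi,s}(1) = \phi_{\xi,s}(n_0)$. Third, since $t_\xi:A_\xi\to \R^+$ is a Lie-group character (from $\Ad$ being a representation on $\mathfrak g_2$ we get $t^2(aa_0)=t^2(a)t^2(a_0)$, and by connectedness of $A_\xi$ we may take $t$ itself multiplicative), we have $t_{aa_0}^{\,s} = t_a^{\,s}\, t_{a_0}^{\,s}$. Plugging this in and reapplying (1) to $g=n_0a_0k_0$ yields
\[
\phi_{\xi,s}(ag) \;=\; t_a^{\,s}\cdot\Bigl(\Ad_{\mathfrak g}^*(k_0^{-1}) \otimes \rho(k_0^{-1})(\phi_{\xi,s}(n_0))\cdot t_{a_0}^{\,s}\Bigr) \;=\; t_a^{\,s}\cdot \phi_{\xi,s}(g),
\]
which is the claim.

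There is no real obstacle here; the only subtle point is to observe that $A_\xi$ normalizes $N_\xi$ so that the Iwasawa decomposition of $ag$ can be read off directly without any $K$-correction term. Everything else is bookkeeping based on the character property of $t_\xi$ and the left $N_\xi$-invariance built into the definition of $\phi_{\xi,s}$.
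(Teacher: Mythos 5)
Your proof is correct and follows essentially the same route as the paper: both decompose $g=n_0a_0k_0$, move $a$ past $n_0$ by conjugation (using that $A_\xi$ normalizes $N_\xi$), and then invoke the multiplicativity of the character $t_\xi$ together with the $N_\xi$-invariance of $\phi_{\xi,s}$. No gaps.
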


\begin{proof}
Let $g=n_{g}a_{g}k_{g}$. By definition, 
$$\phi_{\xi, s}(ag)=\phi_{\xi, s}(an_{g}a_{g}k_{g})=\phi_{\xi, s}(^{a}n_{g} aa_{g}k_{g})=t_{a}^{s}t_{g}^{s}\phi_{\xi}(^{a}n_{g}k_{g})=t_{a}^{s}t_{g}^{s}\phi_{\xi}(k_{g}).$$
where the last equality follows from the $N_{\xi}$-invariance of $\phi_{\xi}$. We also have
$$\phi_{\xi, s}(g)=\phi_{\xi, s}(n_{g}a_{g}k_{g})=t_{g}^{s}\phi_{\xi}(n_{g}k_{g})=t_{g}^{s}\phi_{\xi}(k_{g}). $$
Therefore, $\phi_{\xi, s}(ag)=t_{a}^{s}\phi_{\xi, s}(g).$
\end{proof}

\begin{lemma}(Geometric Bruhat decomposition)\label{lem:Bruhat} Let $G=\SO^+(n+1,1)$ and fix a basepoint $O\in \mathbb H^{n+1}$. For any $\xi, \xi'\in \partial_\infty \mathbb H^{n+1}$, $G$ decomposes as a disjoint union
	\begin{equation*}
		G= \begin{cases}
			P_\xi\cup N_\xi w P_\xi &\text{if } \xi=\xi'\\
			kP_\xi \cup N_{\xi'}kwP_\xi&\text{if } \xi\neq\xi'
		\end{cases},
	\end{equation*}	
	where $w\in K$ is any isometry that reflects the geodesic $O\xi$, and $k\in K$ is any isometry that sends $\xi$ to $\xi'$.
\end{lemma}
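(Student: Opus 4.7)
The plan is to first prove the standard rank-one Bruhat decomposition $G = P_\xi \sqcup N_\xi w P_\xi$, which is precisely the $\xi = \xi'$ case, by a direct geometric analysis of the $G$-action on $\partial_\infty \H^{n+1}$, and then obtain the $\xi \neq \xi'$ case by left-translating this decomposition by $k$.

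For the standard case, I would classify $g \in G$ by where it sends $\xi$. If $g\xi = \xi$, then $g \in P_\xi = \stab_G(\xi)$ by definition. Otherwise, let $\xi^- := w\xi$ denote the opposite endpoint of the geodesic $O\xi$; since $w$ is a reflection of this geodesic, $\xi^- \neq \xi$. The key geometric input is that $N_\xi$ acts simply transitively on $\partial_\infty \H^{n+1} \setminus \{\xi\}$, which in horospherical coordinates at $\xi$ (placing $\xi = \infty$ in the upper half-space) reduces to $\R^n$ acting on itself by translations. Hence there is a unique $n \in N_\xi$ with $n\xi^- = g\xi$, so $(nw)^{-1}g = w^{-1}n^{-1}g$ sends $\xi$ to $w^{-1}\xi^- = \xi$, and therefore lies in $P_\xi$, giving $g \in N_\xi w P_\xi$. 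Disjointness is immediate: if $g = nwp \in N_\xi w P_\xi$, then $g\xi = n\xi^-$, and since $n$ fixes $\xi$ but $\xi^- \neq \xi$, we have $g\xi \neq \xi$, so $g \notin P_\xi$.

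For the $\xi \neq \xi'$ case I would simply left-multiply the standard decomposition by $k$, obtaining $G = kG = kP_\xi \sqcup kN_\xi w P_\xi$. Since $k\xi = \xi'$, conjugation by $k$ carries $\stab_G(\xi)$ to $\stab_G(\xi')$, and in particular $kN_\xi k^{-1} = N_{\xi'}$, so $kN_\xi w P_\xi = N_{\xi'} k w P_\xi$. This yields the claimed decomposition, with disjointness inherited from the standard case. I do not anticipate any substantial obstacle: the only nontrivial geometric input is the simple transitivity of $N_\xi$ on the punctured boundary, which is a classical feature of rank-one parabolic subgroups and can be verified directly in the upper half-space model.
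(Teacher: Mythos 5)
Your proposal is correct and follows essentially the same route as the paper: the $\xi\neq\xi'$ case is obtained exactly as in the paper's proof, by left-translating the decomposition for $\xi=\xi'$ by $k$ and using $kN_\xi k^{-1}=N_{\xi'}$. The only difference is that the paper simply cites the classical Bruhat decomposition for the $\xi=\xi'$ case, whereas you supply a self-contained geometric proof of it via the simple transitivity of $N_\xi$ on $\partial_\infty\H^{n+1}\setminus\{\xi\}$; that argument is sound.
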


\begin{proof}
	The case $\xi=\xi'$ is just the classical Bruhat decomposition. For $\xi\neq\xi'$, and for any $k\in K$ that sends $\xi$ to $\xi'$, using the classical Bruhat decomposition we have,
	$$G=kG=kP_{\xi}\cup k N_{\xi} w P_{\xi}=kP_{\xi}\cup N_{\xi'} k wP_{\xi},$$
	where the last equality uses $kN_{\xi}k^{-1}=N_{\xi'}$. 
\end{proof}	

Although we started with $\phi_{\xi,s}$ whose value is in $C^\infty(P_\xi, \Hom(\Lambda^n \mathfrak n_\xi, V))$, after taking the Eisenstein series and intertwining operator, even though it is still $N_\xi$-invariant, the value could lie in a bigger space $C^\infty(P_\xi, \Hom(\Lambda^n (\mathfrak n_\xi\oplus \mathfrak a_\xi), V))$. For this reason, we introduce the extended space $\overline{\Omega}^{n}(\HH_{\xi}(1), V)\supset \Omega^{n}(\HH_{\xi}(1), V)$ to be the restriction of $V$-valued forms on $\H^{n+1}$ to the horosphere $\HH_{\xi}(1)$, without projecting to its (co)tangent space. In other words, forms in $\overline{\Omega}^{n}(\HH_{\xi}(1), V)$ may look like $(dy\wedge dx_1\wedge\dots \wedge dx_{n-1})\otimes v$. Now we can state our theorem.

\begin{theorem}\label{thm:intertwining}
Let $E(\phi_{\xi}, g, s), E(\phi_{\xi'}, g, s)$ denote the Eisenstein series corresponding to the rank $n$ parabolic fixed points $\xi, \xi'$ respectively. Then for any $g\in P_\xi$, we have
$$E^{\xi}(\phi_{\xi'}, g, s)=\epsilon\phi_{\xi', s}(g)+c_s(\phi_{\xi'})_{-s+2n}(g)$$
where
\begin{equation*}
	\epsilon= \begin{cases}
		0 &\text{if $\Ga_{\xi}$ and $\Ga_{\xi'}$ are not $\Ga$-conjugate}\\
		1 &\text{if $\xi=\xi'$}\\
	\end{cases},
\end{equation*}
and $c_s: \Omega^{n}(\HH_{\xi'}(1), V)^{N_{\xi'}}\rightarrow \overline{\Omega}^{n}(\HH_{\xi}(1), V)^{N_\xi}$ is a linear operator.
\end{theorem}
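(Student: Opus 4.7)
The plan is to follow the standard unfolding computation for the constant term of an Eisenstein series (as in \cite{HarishChandra}), and to use the geometric Bruhat decomposition of Lemma \ref{lem:Bruhat} to separate the ``identity'' contribution from the ``scattering'' contribution. First, substitute the definition $E(\phi_{\xi'},ug,s)=\sum_{[\gamma]\in\Ga_{\xi'}\backslash\Ga}\phi_{\xi',s}(\gamma ug)$ into $E^\xi$ and interchange sum and integral using the absolute convergence from Proposition \ref{prop:convergence}. Regroup the sum by double cosets in $\Ga_{\xi'}\backslash\Ga/\Ga_\xi$ and perform the usual unfolding (each right $\Ga_\xi$-orbit absorbs into the integration domain) to obtain
\[
E^\xi(\phi_{\xi'},g,s)=\sum_{[\gamma]\in\Ga_{\xi'}\backslash\Ga/\Ga_\xi}\int_{\Ga_\xi^{\gamma}\backslash N_\xi}\phi_{\xi',s}(\gamma ug)\,du,
\]
where $\Ga_\xi^\gamma:=\gamma^{-1}\Ga_{\xi'}\gamma\cap\Ga_\xi$.

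Next, stratify each double coset by Lemma \ref{lem:Bruhat}: a representative $\gamma$ lies in the small cell ($P_\xi$ when $\xi=\xi'$, or $kP_\xi$ otherwise) precisely when $\gamma\xi=\xi'$, which forces $\Ga_\xi$ and $\Ga_{\xi'}$ to be $\Ga$-conjugate. When they are not $\Ga$-conjugate, no small-cell double coset exists, accounting for the $\delta=0$ case. When $\xi=\xi'$, the unique small-cell double coset is $[\mathrm{id}]$ with $\Ga_\xi^{\mathrm{id}}=\Ga_\xi$; the integral
\[
\int_{\Ga_\xi\backslash N_\xi}\phi_{\xi,s}(ug)\,du
\]
collapses to $\phi_{\xi,s}(g)$ by the left $N_\xi$-invariance of $\phi_{\xi,s}$ (a defining property of the extended section) and the normalization $\vol(\Ga_\xi\backslash N_\xi)=1$, giving the $\delta=1$ contribution.

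All remaining double cosets lie in the big Bruhat cell and produce the constant term $c_s(\phi_{\xi'})_{-s+2n}(g)$. Write a representative as $\gamma=n'kwp$ with $n'\in N_{\xi'}$, $p\in P_\xi$; by translation invariance and Lemma \ref{t-part}, the outer $A_\xi$-part of $g$ factors out with weight $t_a^{-s+2n}$ once the change of variable $u\mapsto {}^{a}u$ is carried out (the Jacobian on $N_\xi$ contributes $t_a^{2n}$, combined with the inverse scaling from Lemma \ref{t-part} applied to the Iwasawa decomposition of $wu$). The remaining integral over $N_\xi$ takes the form
\[
\int_{N_\xi}t_{a(u)}^{s}\,\Ad^*_{\mathfrak g}(k(u)^{-1})\otimes\rho(k(u)^{-1})\bigl(\phi_{\xi'}(1)\bigr)\,du,
\]
where $wu=n(u)a(u)k(u)\in N_{\xi'}A_{\xi'}K$ is the Iwasawa decomposition. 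This defines the linear operator $c_s$; $N_\xi$-invariance of the output follows because left translation by $u_0\in N_\xi$ in $g$ is absorbed by $u\mapsto u_0^{-1}u$.

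The main obstacle is the explicit Iwasawa decomposition of $wu$ and the associated Gindikin--Karpelevich-type integral, which must be evaluated while tracking the $V$-valued twist $\Ad^*_{\mathfrak g}\otimes\rho$; in the adjoint case this twist interacts nontrivially with $k(u)$ as $u\to\infty$ in $N_\xi$. A secondary subtlety is that absolute convergence of the big-cell integral is not automatic from Proposition \ref{prop:convergence} and requires a separate decay estimate for $t_{a(u)}^{s}$ for large $u$, which in the rank-one hyperbolic setting reduces to the classical inversion formula $t_{a(u)}\asymp(1+|u|^2)^{-1/2}$; integrating against $|u|^{-s}$ on $N_\xi\cong\R^n$ then gives the stated convergence and weight shift.
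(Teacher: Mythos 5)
Your proposal is correct and follows essentially the same route as the paper: unfold the sum over $\Ga_{\xi'}\backslash\Ga$ into double cosets (the paper packages the trivial-stabilizer fact as Lemma \ref{lem:doublecoset} and its analogue), separate the small Bruhat cell, which contributes $\delta\phi_{\xi',s}(g)$, from the big cell, and extract the weight $t_g^{2n-s}$ via the Jacobian of $u\mapsto{}^{a_g^{-1}}u$ together with Lemma \ref{t-part}. Your extra remark on convergence of the unfolded integral over all of $N_\xi$ (via $t_{a(u)}\asymp(1+|u|^2)^{-1/2}$) is a worthwhile detail the paper leaves implicit, but it does not change the argument.
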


\begin{proof}
By the definition, 
$$E^{\xi}(\phi_{\xi'}, g, s)=\int_{u\in  \Ga_{\xi} \backslash N_{\xi}} E(\phi_{\xi'}, ug, s)du=\sum_{\gamma\in \Ga_{\xi'}\backslash \Gamma} \int_{u\in \Ga_{\xi}\backslash N_{\xi}} \phi_{\xi', s}(\gamma u g)du.$$
By assumptions, the Eisenstein series is absolutely convergent and this ensures the interchanging between the integral and the infinite summation. This shows up several times in the rest computations, but we emphasize that all the summations appear are absolutely dominated by the Eisenstein series (which is absolutely convergent), thus interchanging the summation with the integral simply causes no problem by Fubini's theorem.

We first prove the case when $\xi=\xi'$. By the Bruhat decomposition, we have
$$G=SO^{+}(n+1, 1)=P_{\xi}\cup N_{\xi}w P_{\xi}.$$
Hence, elements in $\Gamma$ is either represented by elements in $P_{\xi}$ or by the ones in $N_{\xi}wP_{\xi}$. We denote the set of former elements by $\Ga_{1}$ which is exactly $\Ga_{\xi}$ and the set of latter one by $\Ga_{w}$. Now we split the summation of $E^{\xi}(\phi_{\xi}, g, s)$ into two kinds,
\begin{align*}
	E^{\xi}(\phi_{\xi}, g, s)&=\sum_{\gamma\in \Ga_{\xi}\backslash \Gamma_1} \int_{u\in \Ga_{\xi}\backslash N_{\xi}} \phi_{\xi, s}(\gamma u g)du+\sum_{\gamma\in \Ga_{\xi}\backslash \Gamma_w} \int_{u\in \Ga_{\xi}\backslash N_{\xi}} \phi_{\xi, s}(\gamma u g)du\\
	&=\mathfrak E_1+\mathfrak E_2,
\end{align*}
where $ \Ga_{\xi}\backslash \Ga_w$ makes sense since $\Ga=\Ga_\xi\cup \Ga_w$ and it denotes the set of all non-trivial right cosets of $\Ga_\xi$ in $\Ga$. It is clear that 
$$\mathfrak E_1=\int_{u\in \Ga_{\xi}\backslash N_{\xi}} \phi_{\xi, s}(ug)du=\phi_{\xi, s}(g),$$
where the last equality follows from the $N_{\xi}$-invariance of $\phi_{\xi, s}$. To simplify $\mathfrak E_2$, we will need the following lemma.

\begin{lemma}\label{lem:doublecoset}
	$\Ga_\xi$ acts on $\Ga_\xi\backslash \Ga_w$ from the right with trivial stabilizer, hence the quotient is the double coset  $\Ga_{\xi}\backslash \Ga_{w} / \Ga_{\xi}$.
\end{lemma}
\begin{proof}
	Given any $\ga_w\in \Ga_w$ and $a\in \Ga_\xi$ such that $\Ga_\xi \ga_w\cdot a=\Ga_\xi\ga_w$, it suffices to show $a=1$. From the identity, we obtain $\ga_w a \ga_w^{-1}\in \Ga_\xi$, or that $a\in \Ga_{\ga_w\xi}$. Hence, 
	\[a\in \Ga_{\xi}\cap  \Ga_{\ga_w\xi}.\]
	Since $\ga_w\notin \Ga_\xi$, we know $\ga_w\xi\neq \xi$ hence $\Ga_{\xi}\cap  \Ga_{\ga_w\xi}=1$. Therefore $a=1$ and the action has trivial stabilizer.
\end{proof}
Now we can simplify $\mathfrak E_2$ as
\begin{align*}
	\sum_{\ga\in  \Ga_{\xi}\backslash \Ga_{w}}\int_{ u\in \Ga_{\xi}\backslash N_{\xi}}\phi_{\xi, s}(\ga ug)du&=\sum_{\ga\in \Ga_{\xi}\backslash \Ga_{w} / \Ga_{\xi}} \sum_{\ga_\xi\in \Ga_\xi}\int_{ u\in \Ga_{\xi}\backslash N_{\xi}}\phi_{\xi, s}(\ga \ga_\xi ug)du\\
	&=\sum_{\ga\in \Ga_{\xi}\backslash \Ga_{w} / \Ga_{\xi}} \int_{u\in N_{\xi}} \phi_{\xi, s}(\ga ug)du,
\end{align*}
where the last equality uses the fact that $N_\xi$ is abelian and $du$ is the bi-invariant Haar measure. We compute further $\mathfrak E_2$.

{\small
\begin{align*}
\mathfrak E_2 &= \sum_{\gamma\in \Ga_{\xi}\backslash \Ga_{w}/ \Ga_{\xi}} \int_{u\in N_{\xi}} \phi_{\xi, s}(\gamma u g) du \\
&=  \sum_{\gamma\in \Ga_{\xi}\backslash \Ga_{w}/ \Ga_{\xi}} \int_{u\in N_{\xi}} \phi_{\xi, s}(u_{\gamma} w p_{\gamma} u g)du && \gamma=u_{\gamma} wp_{\ga}\in N_\xi w P_\xi  \\
&=   \sum_{\gamma\in \Ga_{\xi}\backslash \Ga_{w}/ \Ga_{\xi}} \int_{u\in N_{\xi}} \phi_{\xi, s}(wp_{\ga} u n_{g} a_{g} m_{g} ) du && \phi_{\xi, s} \text{ is } N_{\xi} \text{-invariant} \\
&=   \sum_{\gamma\in \Ga_{\xi}\backslash \Ga_{w}/ \Ga_{\xi}} \int_{u\in N_{\xi}} \phi_{\xi, s}(wm_{\ga} a_{\ga} n_{\ga} u n_{g} a_{g} m_{g} ) du &&  p_\ga =m_\ga a_\ga n_\ga\in K_\xi A_\xi N_\xi\\
&= \sum_{\gamma\in \Ga_{\xi}\backslash \Ga_{w}/ \Ga_{\xi}} \int_{u'\in N_{\xi}} \phi_{\xi, s}(wm_{\ga} a_{\ga} u'a_{g} m_{g} ) du' &&  \text{set } u'=n_\ga u n_g, du'=du\\
	&=  \sum_{\gamma\in \Ga_{\xi}\backslash \Ga_{w}/ \Ga_{\xi}} \int_{u'\in N_{\xi}} \phi_{\xi, s}(w a_{\ga} a_{g} m_\ga  \tensor[^{a_{g}^{-1}}]{(u')}{} m_g) du' &&   K_\xi \text{ commutes with } A_\xi \\
		&=  \sum_{\gamma\in \Ga_{\xi}\backslash \Ga_{w}/ \Ga_{\xi}} \int_{u''\in N_{\xi}} \phi_{\xi, s}(w a_{\ga} a_{g} m_\ga u'' m_g) du'' \cdot t_g^{2n} &&   \text{set } u''= \tensor[^{a_{g}^{-1}}]{(u')}{}, du''=t_g^{-2n}du'\\
         &= \sum_{\gamma\in \Ga_{\xi}\backslash \Ga_{w}/ \Ga_{\xi}}  \int_{u''\in N_{\xi}} \phi_{\xi, s}(a_{\ga}^{-1} a_{g}^{-1} w  m_\ga u'' m_{g}) du''\cdot t_g^{2n} && w \text{ inverses } A_\xi \\
         &=     \sum_{\gamma\in \Ga_{\xi}\backslash \Ga_{w}/ \Ga_{\xi}}  \int_{u''\in N_{\xi}} \phi_{\xi,s}(w m_\ga u'' m_{g}   ) du'' t_{\ga}^{-s} t_{g}^{2n-s} &&  \text{by Lemma }\ref{t-part} \\
         &=\Ad_{\mathfrak{g}}^{*}(m_g^{-1})\otimes \rho(m_g^{-1})\left(\sum_{\gamma\in \Ga_{\xi}\backslash \Ga_{w}/ \Ga_{\xi}}  \int_{u''\in N_{\xi}} \phi_{\xi,s}(w m_{\ga}u'') d u'' t_{\ga}^{-s} \right)t_{g}^{2n-s}\\
         &=\Ad_{\mathfrak{g}}^{*}(m_g^{-1})\otimes \rho(m_g^{-1})(C_0(\phi_{\xi,s}))\cdot t_{g}^{2n-s},
\end{align*}
}%
where in the last line $C_0(\phi_{\xi,s})$ is just a constant (independent of $g$) in $\Hom(\Lambda^n(\mathfrak a_\xi\oplus \mathfrak p_\xi),V)$.  Hence, if we write the variable $g=n_g a_g m_g$, then we can regard $\mathfrak E_2$ as the differential form constructed via the extension (with degree $2n-s$) of the $N_\xi$-invariant form whose initial value is exactly $C_0(\phi_{\xi,s})$. We denote  $c_s: \Omega^{n}(\HH_{\xi}(1), V)^{N_{\xi}}\rightarrow \overline{\Omega}^{n}(\HH_{\xi}(1), V)^{N_\xi}$ the unique operator given by the initial value $c_s(\phi_\xi)(1)=C_0(\phi_{\xi,s})$. From the expression, it is clear that $c_s$ is linear. Therefore we obtain,
\begin{align*}
	E^{\xi}(\phi_{\xi}, g, s)=\int_{u\in \Ga_{\xi}\backslash N_{\xi}} E(\phi_\xi, ug, s) du&=\phi_\xi(n_g m_g) t_{g}^{s}+ c_s(\phi_\xi)(n_g m_g) t_{g}^{2n-s}\\
	&=\phi_{\xi, s}(g)+c_s(\phi_\xi)_{(2n-s)}(g)
\end{align*}



Next, we assume $\xi$ and $\xi'$ are not $\Ga$-conjugate. Then similar to Lemma \ref{lem:doublecoset}, we have,

\begin{lemma}
	Suppose $\xi$ and $\xi'$ are not $\Ga$-conjugate, then $\Ga_\xi$ acts on $\Ga_{\xi'}\backslash \Ga$ from the right with trivial stabilizer, hence the quotient is the double set  $\Ga_{\xi'}\backslash \Ga/ \Ga_{\xi}$.
\end{lemma}

\begin{proof}
	For any $\ga\in \Ga$ and $a\in \Ga_\xi$ such that $\Ga_{\xi'} \ga\cdot a=\Ga_{\xi'}\ga$, we have $\ga a \ga^{-1}\in \Ga_{\xi'}$, or that $a\in \Ga_{\ga\xi'}$. Hence, 
	\[a\in \Ga_{\xi}\cap  \Ga_{\ga\xi'}.\]
	Since $\xi$ and $\xi'$ are not $\Ga$-conjugate, we know $\ga\xi'\neq \xi$ hence $\Ga_{\xi}\cap  \Ga_{\ga\xi'}=1$. Therefore $a=1$ and the action has trivial stabilizer.
\end{proof}

Using the lemma, we can simplify 
\begin{align*}
	 E^{\xi}(\phi_{\xi'}, g, s)&=\sum_{\gamma\in \Ga_{\xi'}\backslash \Ga} \int_{u\in  \Ga_{\xi}\backslash N_{\xi}} \phi_{\xi', s}(\gamma u g)du\\
	 &= \sum_{\ga\in \Ga_{\xi'}\backslash \Ga/ \Ga_{\xi}} \int_{u\in  N_{\xi}} \phi_{\xi', s}(\gamma u g)du\\
	 &=\mathfrak E.
\end{align*}
Also by the Bruhat decomposition (Lemma \ref{lem:Bruhat}), for any $\gamma\in \Ga$, either $\gamma\in kP_\xi$ or $\gamma\in N_{\xi'}kwP_{\xi}$. If $\gamma\in kP_\xi$, then $k^{-1}\gamma\in P_\xi$, so $k^{-1}\gamma\xi=\xi$, or that $\gamma\xi=k\xi=\xi'$, contradiction to that $\xi$ and $\xi'$ are not $\Ga$-conjugate. Thus $\gamma\in N_{\xi'}kwP_{\xi}$. We now further compute $\mathfrak E$.
{\small
\begin{align*}
\mathfrak E &=\sum_{\ga\in \Ga_{\xi'}\backslash \Ga/ \Ga_{\xi}} \int_{u\in  N_{\xi}} \phi_{\xi', s}(\gamma u g)du\\
&=\sum_{\ga\in \Ga_{\xi'}\backslash \Ga/ \Ga_{\xi}}  \int_{u\in N_{\xi}} \phi_{\xi', s}(n'_{\gamma} k w p_{\gamma}  u g)du &&  \ga=n'_{\ga}kwp_{\ga}\in N_{\xi'}kw P_{\xi}\\
 &=\sum_{\ga\in \Ga_{\xi'}\backslash \Ga/ \Ga_{\xi}}  \int_{u\in N_{\xi}}  \phi_{\xi', s}(k w p_{\gamma} ug) du && \phi_{\xi', s} \text{ is } N_{\xi'}\text{-invariant}\\
 &=   \sum_{\gamma\in \Ga_{\xi'}\backslash \Ga/ \Ga_{\xi}} \int_{u\in N_{\xi}} \phi_{\xi', s}(kwm_{\ga} a_{\ga} n_{\ga} u n_{g} a_{g} m_{g} ) du &&  \text{(1)}\\
 &= \sum_{\gamma\in \Ga_{\xi'}\backslash \Ga/ \Ga_{\xi}} \int_{u'\in N_{\xi}} \phi_{\xi', s}(kwm_{\ga} a_{\ga} u'a_{g} m_{g} ) du' &&  \text{set } u'=n_\ga u n_g, du'=du\\
 &=  \sum_{\gamma\in \Ga_{\xi'}\backslash \Ga/ \Ga_{\xi}} \int_{u'\in N_{\xi}} \phi_{\xi', s}(kw a_{\ga} a_{g} m_\ga  \tensor[^{a_{g}^{-1}}]{(u')}{} m_g) du' &&   K_\xi \text{ commutes with } A_\xi \\
 &=  \sum_{\gamma\in \Ga_{\xi'}\backslash \Ga/ \Ga_{\xi}} \int_{u''\in N_{\xi}} \phi_{\xi', s}(kw a_{\ga} a_{g} m_\ga u'' m_g) du'' \cdot t_g^{2n} &&   \text{set } u''= \tensor[^{a_{g}^{-1}}]{(u')}{}, du''=t_g^{-2n}du'\\
 &= \sum_{\gamma\in \Ga_{\xi'}\backslash \Ga/ \Ga_{\xi}}  \int_{u''\in N_{\xi}} \phi_{\xi', s}(\tensor[^{k}]{(a_{\ga}^{-1} a_{g}^{-1})}{} k w  m_\ga u'' m_{g}) du''\cdot t_g^{2n} && w \text{ inverses } A_\xi \\
 &=     \sum_{\gamma\in \Ga_{\xi'}\backslash \Ga/ \Ga_{\xi}}  \int_{u''\in N_{\xi}} \phi_{\xi',s}(kw m_\ga u'' m_{g}   ) du'' t_{\ga}^{-s} t_{g}^{2n-s} &&  \text{(2)}\\
 &=\Ad^*(m_g^{-1})\otimes \rho(m_g^{-1})\left(\sum_{\gamma\in \Ga_{\xi'}\backslash \Ga_{w}/ \Ga_{\xi}}  \int_{u''\in N_{\xi}} \phi_{\xi',s}(kw m_{\ga}u'') d u'' t_{\ga}^{-s} \right)t_{g}^{2n-s} && (3)\\
 &=\Ad^*(m_g^{-1})\otimes \rho(m_g^{-1})(C_0(\phi_{\xi',s}))\cdot t_{g}^{2n-s},
\end{align*}
}%

where in (1), we apply the Langlands decompositions $p_{\gamma}=m_{\ga}a_{\ga}n_{\ga}\in M_{\xi}A_{\xi}N_{\xi}, g=n_{g}a_{g}m_{g}\in N_{\xi}A_{\xi}M_{\xi}$.  In (2), we use $k a_{\ga}^{-1}a_{g}^{-1} k^{-1}\in A_{\xi'}$ and the fact that $k$ is an isometry which send character $t_\xi$ to $t_{\xi'}$. Then we apply Lemma \ref{t-part}. In (3), we use the property of $\phi_{\xi,s}$ and the fact that $m_g\in K$. Hence, if we denote  $c_s: \Omega^{n}(\HH_{\xi'}(1), V)^{N_{\xi'}}\rightarrow \overline{\Omega}^{n}(\HH_{\xi}(1), V)^{N_\xi}$ the unique operator given by the initial value $c_s(\phi_{\xi'})(1)=C_0(\phi_{\xi',s})$, then
$E^{\xi}(\phi_{\xi'}, g, s)=c_s(\phi_{\xi'})(n_{g}m_{g})t_{g}^{-s+2n}$. 
\end{proof}




\section{cohomology classes associated to cusps}\label{sec:proof}
In this section, we study the cohomology class of the restriction of the  Eisenstein series $E(\phi_{\xi'}, g, s)$ to the cusp associated to the parabolic fixed point $\xi$. Throughout the section we assume $s=2n+2$ if $V=\mathfrak{g}$ and $s=2n$ If $V=\R$. We also assume the Eisenstein series converges at $s$. By Corollary \ref{coro:closed1}, we have $d E(\phi_{\xi'}, g, s)=0$. Then 
$$d E^{\xi}(\phi_{\xi'}, g, s)=d \int_{u\in  \Gamma_{\xi} \backslash N_{\xi}} E(\phi_{\xi'}, ug, s) du=\int_{u\in  \Gamma_{\xi} \backslash N_{\xi}} d  E(\phi_{\xi'}, ug, s) du=0.$$
As before, the absolute convergence of the Eisenstein series  ensures the interchanging of differential and integral, and the last equality follows from Lemma \ref{lem:commute}. We will use the computation of $E^{\xi}(\phi_{\xi}, g, s)$ in Section \ref{sec:intertwining} to see which cohomology class $[E^{\xi}(\phi_{\xi'}, g, s)]|_{\HH_{\xi}(1)}$ represents in $H^n(\Ga_\xi,V)$.

\begin{proposition}\label{prop:cohomology}
Let $\xi$ and $\xi'$ be full rank parabolic fixed points. Then 
$$[E^{\xi}(\phi_{\xi'}, g, s)]|_{\HH_{\xi}(1)}=[\phi_{\xi}]\in H^{n}( \Ga_{\xi}, V)$$
 if $\xi=\xi'$. If $\xi$ and $\xi'$ are not $\Ga$-conjugate, then  
$$[E^{\xi}(\phi_{\xi'}, g, s)]|_{\HH_{\xi}(1)}=0\in H^{n}(\Ga_{\xi}, V).$$ 

\end{proposition}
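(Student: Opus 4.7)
The strategy is to plug in Theorem \ref{thm:intertwining} and then kill the correction term by an eigenspace obstruction forced by closedness.

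First, Theorem \ref{thm:intertwining} gives
\[
E^{\xi}(\phi_{\xi'},g,s) = \delta\,\phi_{\xi',s}(g) + c_s(\phi_{\xi'})_{-s+2n}(g),
\]
with $\delta = 1$ when $\xi = \xi'$ and $\delta = 0$ when $\xi, \xi'$ are not $\Ga$-conjugate. Pulling back to $\HH_\xi(1)$ (where $t=1$), the first term restricts to $\delta\phi_\xi$, which represents $[\phi_\xi]$ when $\xi=\xi'$ and $0$ otherwise. It therefore suffices to show that the restriction of $c_s(\phi_{\xi'})_{-s+2n}$ to $\HH_\xi(1)$ represents the zero class in $H^n(\Ga_\xi,V)$.

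To isolate the restriction I would split $c_s(\phi_{\xi'})_{-s+2n} = \alpha + \tfrac{dt}{t}\wedge \beta$ into the pure-horosphere part $\alpha$ (no $dt$-factor) and a mixed part; only $\alpha|_{\HH_\xi(1)}$ survives the pull-back. Both $E^\xi$ and $\phi_{\xi',s}$ are closed at the chosen $s$ by Corollary \ref{coro:closed1} and Proposition \ref{prop:adclosed}, so the correction term is closed. Expanding $d\bigl(\alpha + \tfrac{dt}{t}\wedge \beta\bigr)=0$ and using $d(\tfrac{dt}{t}) = 0$ yields the identity $d\alpha = \tfrac{dt}{t}\wedge d\beta$ on $\H^{n+1}$.

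Now I would exploit the eigenspace decomposition. Writing $\alpha(1) = (u_1^*\wedge\cdots\wedge u_n^*)\otimes v$ with $v = v_{-2}+v_0+v_2$ in the adjoint case, Proposition \ref{prop:adclosed} applied componentwise gives
\[
d\alpha = \sum_{\ell}(-s+\ell)\,\tfrac{dt}{t}\wedge (u_1^*\wedge\cdots\wedge u_n^*)\otimes v_\ell.
\]
On the other hand, since $\mathfrak n_\xi$ is abelian and $\beta$ is $N_\xi$-invariant, the Chevalley--Eilenberg differential on the horosphere reduces to $d\beta(u_1,\dots,u_n) = \sum_i (-1)^{i+1}\rho(u_i)\beta(u_1,\dots,\widehat{u_i},\dots,u_n)$, which lies in $V_0\oplus V_2$ because $\rho(u_i)=\ad(u_i)$ sends $V$ into $V_0\oplus V_2$. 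Projecting the closedness identity onto $V_{-2}$ therefore forces $(-s-2)v_{-2}=0$ in the adjoint case and $-s\cdot v = 0$ in the trivial case; both coefficients are nonzero for $s = 2n+2$ or $s = 2n$ respectively, so $v_{-2}=0$ (adjoint) or $v=0$ (trivial). By Lemma \ref{lem:lie alg cohomology} and the Van-Est isomorphism (Theorem \ref{thm:vanEst}), $\alpha|_{\HH_\xi(1)}$ is trivial in $H^n(\mathfrak n_\xi,V)\cong H^n(Z_\xi,V)$; in the non-toric case, the injectivity $H^n(\Ga_\xi,V)\hookrightarrow H^n(Z_\xi,V)$ from Proposition \ref{prop:nontoric} upgrades this to triviality in $H^n(\Ga_\xi,V)$.

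The main obstacle will be book-keeping the decomposition $\alpha + \tfrac{dt}{t}\wedge\beta$ in a way that respects the $K_\xi$-twist in the definition of the extension, and verifying that the closedness identity splits cleanly along the eigenspaces of $\ad(A_\xi)$. Once that is under control, the vanishing is forced by a weight-shift: the correction term carries degree $-s+2n$, which is displaced by $-s$ from the closedness degree of the harmonic form $\phi_{\xi,s}$, so the $V_{-2}$-projection of the closedness equation has a nonzero coefficient and kills the only cohomologically non-trivial component of $\alpha|_{\HH_\xi(1)}$.
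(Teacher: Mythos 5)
Your proposal is correct and follows essentially the same route as the paper's proof: apply Theorem \ref{thm:intertwining}, use closedness of the correction term together with Proposition \ref{prop:adclosed} applied weight-by-weight, note that the horospherical Chevalley--Eilenberg differential of the $\frac{dt}{t}$-part lands in $V_0\oplus V_2$, and conclude that the $V_{-2}$-component of the correction term vanishes (forcing $c_1=0$ outright in the trivial case). The only cosmetic difference is that where you invoke the surjectivity half of Lemma \ref{lem:lie alg cohomology} to dispose of the remaining $V_0\oplus V_2$ part, the paper instead reads off explicit primitives $d(c'_0)/(2-s)$ and $d(c'_{-2})/(-s)$ from the $V_2$- and $V_0$-components of the same closedness identity.
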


\begin{proof}
By Theorem \ref{thm:intertwining}, 
$$E^{\xi}(\phi_{\xi'}, g, s)=\epsilon\phi_{\xi', s}(g)+c_{s}(\phi_{\xi'})_{-s+2n}(g). $$
 Since $d E^{\xi}(\phi_{\xi'}, g, s)=0$, and $d \phi_{\xi', s}(g)=0$, we obtain 
$$d(c_{s}(\phi_{\xi'})_{-s+2n})(g)=0.$$
Hence, it suffices to prove that
$$[c_{s}(\phi_{\xi'})_{-s+2n}(g)]|_{\HH_{\xi}(1)}=0\in H^{n}(\Ga_{\xi}, V).$$

We first consider the case that $V=\mathfrak{g}$. Recall that we can regard the extended form $c_{s}(\phi_{\xi'})\in \overline{\Omega}(\HH_{\xi}(1), V)^{N_\xi}$ as a function in $C^{\infty}(P_{\xi}(1), \Hom(\Lambda^{n}(\mathfrak a_\xi\oplus\mathfrak{n}_{\xi}), V))$. Since $V=V_{-2}\oplus V_{0}\oplus V_{2}$ and $c_{s}(\phi_{\xi'})$ is $N_{\xi}$-invariant,
we can decompose $c_{s}(\phi_{\xi'})(1)$ as
$$c_{s}(\phi_{\xi'})(1)=A_{2}+A_{0}+A_{-2}+\dfrac{dt}{t}  \wedge A'_{2} + \dfrac{dt}{t} \wedge A'_{0}+\dfrac{dt}{t} \wedge A'_{-2}, $$
where $A_{i}\in \Lambda^{n}\mathfrak{n}^{\ast}_{\xi}\otimes V_{i}$, and $A'_{i}\in \Lambda^{n-1} \mathfrak{n}^{\ast}_{\xi}\otimes V_{i}$ for $i\in \{-2, 0, 2\}$. For each $i$, we construct an $N_{\xi}$-invariant $V$-valued form $c_{i}(\phi_{\xi'})$ (or $c'_{i}(\phi_{\xi'})$) on the horosphere $\HH_{\xi}(1)$ whose initial value is $A_i$ (or $A'_{i}$), that is, $c_{i}(\phi_{\xi'})\in C^{\infty}(P_{\xi}(1), \Hom(\Lambda^{n} \mathfrak{n}_{\xi}, V))$ satisfies:
\begin{enumerate}
\item $c_{i}(\phi_{\xi'})(n)=c_{i}(\phi_{\xi'})(1)=A_{i}$,
\item $c_{i}(\phi_{\xi'})(pm)=\Ad^{\ast}_{\mathfrak{a}_{\xi}\oplus \mathfrak n_\xi}(m^{-1})\otimes \rho(m^{-1})(c_{i}(\phi_{\xi'})(p)) \text{ for all } m\in K_{\xi} \text{ and } p\in P_{\xi}(1). $
\end{enumerate}
And $c'_{i}(\phi_{\xi'})$ is defined similarly. Following the construction in Section \ref{sec:nform}, we can define degree $2n-s$ extensions of $c_{i}(\phi_{\xi'})$ and $c'_{i}(\phi_{\xi'})$  in $C^{\infty}(P_{\xi}, \Hom(\Lambda^{n} \mathfrak{p}_{\xi}, V))$, denoted by $(c_{i}(\phi_{\xi'}))_{2n-s}$ and $(c'_{i}(\phi_{\xi'}))_{2n-s}$ respectively. It follows that
$$c_{s}(\phi_{\xi'})_{2n-s}=\sum_{i} (c_{i}(\phi_{\xi'}))_{2n-s}+\sum_{i}\dfrac{dt}{t}\wedge (c'_{i}(\phi_{\xi'}))_{2n-s}.$$
By Proposition \ref{prop:adclosed},
\begin{align*}
0=d(c_{s}(\phi_{\xi'})_{2n-s})&=(2n-s+2-2n) \dfrac{dt}{t} \wedge c_{2}(\phi_{\xi'})_{2n-s}+(2n-s-2n) \dfrac{dt}{t} \wedge c_{0}(\phi_{\xi'})_{2n-s}\\
 &+(2n-s-2-2n)  \dfrac{dt}{t}\wedge c_{-2}(\phi_{\xi'})_{2n-s} +\sum_{i}\dfrac{dt}{t} \wedge (dc'_{i}(\phi_{\xi'})_{2n-s}),
\end{align*}
where $i\in \{-2, 0, 2\}$. Observe that 
$$ \dfrac{dt}{t} \wedge c_{i}(\phi_{\xi'})_{2n-s}(1)\in  \dfrac{dt}{t}\wedge \Hom(\Lambda^{n} \mathfrak{n}_{\xi}, V_{i})$$
and by the proof of Lemma \ref{lem:lie alg cohomology},
$$ \dfrac{dt}{t}\wedge d(c'_{i}(\phi_{\xi'})_{2n-s})(1)\in \dfrac{dt}{t}\wedge  \Hom(  \Lambda^{n} \mathfrak{n}_{\xi}, V_{i+2})$$
for $i\in \{-2, 0, 2\}$. 
Therefore, by comparing their components in $V_{2}, V_{0},$ and  $V_{-2}$ respectively, we obtain
$$  \dfrac{dt}{t}\wedge (2n-s+2-2n) c_{2}(\phi_{\xi'})_{2n-s}(1)+  \dfrac{dt}{t}\wedge d(c'_{0}(\phi_{\xi'}))_{2n-s}(1)=0,$$
$$\dfrac{dt}{t}\wedge (2n-s-2n)c_{0}(\phi_{\xi'})_{2n-s}(1) + \dfrac{dt}{t}\wedge d(c'_{-2}(\phi_{\xi'}))_{2n-s}(1)=0,$$
$$ \dfrac{dt}{t} \wedge (2n-s-2-2n)c_{-2}(\phi_{\xi'})_{2n-s}(1)=0.$$
Note that  $s=2+2n$, the coefficients in the above equalities are nonzero. Thus we have
$$c_{2}(\phi_{\xi'})(1)=\dfrac{d(c'_{0}(\phi_{\xi'}))(1)}{2-s},$$
$$c_{0}(\phi_{\xi'})(1)=\dfrac{d(c'_{-2}(\phi_{\xi'}))(1)}{-s},$$
$$c_{-2}(\phi_{\xi'})(1)=0.$$
By Lemma \ref{lem:commute}, 
$$c_{2}(\phi_{\xi'})=\dfrac{d(c'_{0}(\phi_{\xi'}))}{2-s},$$
$$c_{0}(\phi_{\xi'})=\dfrac{d(c'_{-2}(\phi_{\xi'}))}{-s},$$
$$c_{-2}(\phi_{\xi'})=0.$$
We see that $c_{0}(\phi_{\xi'})+c_{2}(\phi_{\xi'})$ is a coboundary, which equals the restriction (in the strong sense, i.e. also project the (co)tangent space) of $c_{s}(\phi_{\xi'})_{2n-s}$ to the horosphere $\HH_{\xi}(1)$. Hence 
$$[c_{s}(\phi_{\xi'})_{-s+2n}]|_{\HH_{\xi}(1)}=0\in H^n(\Ga_\xi, \Ad).$$
The argument is similar for the case that $V=\R$. In this case, we write
$$c_{s}(\phi_{\xi'})=c_{1}(\phi_{\xi'})+\dfrac{dt}{t}\wedge c_{2}(\phi_{\xi'})$$
where $c_{1}(\phi_{\xi'})\in C^{\infty}(P_{\xi}(1), \Hom(\Lambda^{n}\mathfrak{n}_{\xi}, \R))$ and $c_{2}(\phi_{\xi'})\in C^{\infty}(P_{\xi}(1),  \Hom(\Lambda^{n-1}\mathfrak{n}_{\xi}, \R))$. By Proposition \ref{prop:adclosed}, 
$$0=d(c_{s}(\phi_{\xi'})_{2n-s})=(2n-s-2n)\dfrac{dt}{t}\wedge c_{1}(\phi_{\xi'})_{2n-s},$$
which implies that $c_{1}(\phi_{\xi'})=0$. Hence
$$[c_{s}(\phi_{\xi'})_{-s+2n}]|_{\HH_{\xi}(1)}=0\in H^n(\Ga_\xi, \R).$$
\end{proof}

Now we are ready to prove Theorem \ref{thm:trivial}, \ref{thm:Ad} and Corollary \ref{cor:cusp}.

\subsection*{Proof of Theorem  \ref{thm:trivial}, \ref{thm:Ad}}Since every full rank parabolic subgroup corresponds to a parabolic fixed point $\xi\in \H^{n+1}$, it follows immediately from Proposition \ref{prop:cohomology}. Also, the harmonicity of the Eisenstein series $E(\phi_{\xi})$ in the case of trivial coefficient is clear since it is the (absolutely convergent) sum of harmonic forms (See Remark \ref{rem:harmonic}).\qed


\subsection*{Proof of Corollary \ref{cor:cusp}} Let $C_1,..., C_N$ be $N$ toric cusps. For each cusp $C_i$, choose a corresponding parabolic subgroup $\Ga_i\cong \mathbb Z^n$ (unique up to conjugate). By Lemma \ref{lem:lie alg cohomology} and Theorem \ref{thm:vanEst}, the dimension of the cohomology group $H^n(\Ga_i,\Ad)$ is $n$. Thus, in view of Theorem \ref{thm:Ad}, they in total correspond to $nN$ linearly independent Eisenstein cohomology classes in $H^n(\Ga,\Ad)$. So the corollary follows immediately.\qed

\section{Further discussions}

 Our work seems to suggest that the classical method of Eisenstein series should also fit in a broader context for certain problems, and this paper only focuses on a specific aspect of that, namely the cusp counting problem for hyperbolic manifolds. Before discussing possible directions of extensions of our results, we first give some examples where our theorems can be applied to. Nevertheless, we would like to point out that the dimension of group cohomology is often very hard to compute, so it is unclear how sharp the inequality in Corollary \ref{cor:cusp} is in general.

\subsection*{Non-uniform lattices} As is mentioned in the introduction, non-uniform lattices do not satisfy the assumption and conclusion of Theorem \ref{thm:trivial}. In this case, $\delta(\Gamma)=n$, $\Ga$ is of divergent type, and cohomology classes (with trivial coefficient) arising from cusps \emph{cannot} be linearly independent. However, our Theorem \ref{thm:Ad} and Corollary \ref{cor:cusp} both apply, thus giving an explicit upper bound on the number of cusps. One type of explicit constructions of lattices comes from arithmetic (e.g. take $\Ga=\SO(n+1,1;\mathbb Z)<\SO(n+1,1)$ up to finite index to kill the torsion). The number of cusps thus is closely related to the arithmetic feature of the construction (e.g. the ideal class group of the corresponding number field). There are also non-arithmetic constructions of lattices due to the work of Gromov--Piatetski-Shapiro \cite{GPS}, and our results are potentially more useful in these examples.

\subsection*{Geometrically finite Kleinian groups} The simplest such example arises from the Schottky-type construction. Take a rank $n$ parabolic subgroup $\Ga_1\cong \mathbb Z^n<\operatorname{Isom}^+(\mathbb H^{n+1})$ and an elementary subgroup generated by a single hyperbolic translation $\Ga_2\cong \mathbb Z<\operatorname{Isom}^+(\mathbb H^{n+1})$, then up to a choice of conjugates of $\Ga_1, \Ga_2$, the group $\Ga:=<\Ga_1,\Ga_2>$ is isomorphic to the free amalgamation $\Ga_1* \Ga_2$ by Maskit's Klein combination theorem \cite{Maskit}. The resulting Kleinian group is then geometrically finite and have exactly one cusp. On the other hand, the critical exponent satisfies $\delta(\Ga)<n$, so in this case both our Theorems hold. It is clear that the $n$-th betti number is $1$, but $H^n(\Ga, \Ad)$ will depend on the representation of $\Ga$ in $\operatorname{Isom}^+(\mathbb H^{n+1})$. Thus, our corollary gives a uniform lower bound on  $\dim H^n(\Ga, \Ad)$, i.e., $\dim H^n(\Ga, \Ad)\geq n$, and in fact it holds for all geometrically finite Kleinian groups $\Ga\cong \mathbb Z^n*\mathbb Z$.

\subsection*{Geometrically infinite Kleinian groups} As we mentioned in Remark \ref{rem:non-toric}, the Kleinian groups $\Ga<\operatorname{Isom}^+(\mathbb H^{n+1})$ ($4\leq n\leq 7$) constructed in \cite{IMM} has infinitely many full rank toric cusps. It is a finitely generated (also finitely presented in the case $n=6, 7$) normal subgroup of a lattice, which is constructed from an algebraic fibration of the lattice over $\mathbb Z$. It follows that $\delta(\Ga)=n$ \cite{Roblin}. As a consequence of Corollary \ref{cor:cusp}, the cohomology group $H^{n}(\Ga, \Ad)$ has infinite dimension.  On the other hand, Kapovich's example \cite{Kapovich1} of Kleinian group $\Ga<\operatorname{Isom}^+(\mathbb H^{4})$ has infinitely many rank $1$ cusps, and the critical exponent satisfies $\delta(\Ga)\in [2,3]$. Our results do not cover this example since the cusp is not full rank, but it would be very interesting to see a generalization of the results to lower degrees.\\

By carefully examining all these examples, we believe that there might be interesting general relations between the value of critical exponent and the number of cusps for a Kleinian group.

\begin{conjecture}
	Given a finitely generated Kleinian group $\Ga<\operatorname{Isom}^+(\mathbb H^{n+1})$, if $\delta(\Ga)<k$ for a positive integer $k\leq n$, then $\Ga$ has only finitely many rank $k$ cusps.
\end{conjecture}

We now proceed to discuss possible extensions of our results.

\subsection*{Other coefficient modules} To possibly extend our main results, one can try to consider other $\Ga$-coefficient modules $V$. We believe this should be straightforward. First, to construct a cohomology classes in $H^n(\Ga_\xi,V)$, one takes the $V_{-\lambda}$ sections where $\lambda$ denotes the highest weight of the representation. Second, the absolute convergence of Eisenstein series follows a similar argument provided $\lambda>0$, which is automatic if $V$ is not the trivial representation. Finally, the computation of the intertwining operators in Section \ref{sec:intertwining} works verbatim, and the argument in Section \ref{sec:proof} also works very similarly. Our Theorem $\ref{thm:Ad}$ is thus expected to hold for any non-trivial coefficient module. However, one cautionary point is that our Lemma \ref{lem:lie alg cohomology} may not hold any more.

\subsection*{Other degrees} One can also consider lower degree Eisenstein cohomology classes in $H^k(\Ga,V)$. But the construction of cohomology classes in $H^k(\Ga_\xi,V)$ is more delicate. For example, the Lie algebra cohomology $H^k(\mathfrak u, V)$ might be zero. When it is non-zero, then one can construct the Eisenstein series and it is absolutely convergent if the weights of the constructed cohomology classes are small enough (negative large) in terms of $k$ and the critical exponent of $\Ga$. The rest computation of the intertwining operators follows similarly. It would be very interesting to see if one can construct an absolutely convergent Eisenstein cohomology class in $H^1(\Ga,V)$. This would lead to a cusp finiteness theorem since finitely generated groups always satisfy $\dim H^1(\Ga,V)<\infty$.

\subsection*{Rank $k$ cusps} When $\Ga_\xi$ is a rank $k$ parabolic group, there are natural classes in $H^k(\Ga_\xi,V)$. Under the same construction, the Eisenstein series is comparable to the Poincar\'e series if the parabolic fixed point is \emph{bounded}. Thus, the absolute convergence again depends on the weights of the class constructed in $H^k(\Ga_\xi,V)$, the value $k$, and the critical exponent of $\Ga$. However, the most difficult part is the computation of the intertwining operators. Our method fails when $\phi_{\xi,s}$ is not $N_\xi$-invariant. One needs to further analyze the behavior of the function $\phi_{\xi,s}$ in the orthogonal directions to the subspace where it is invariant.

\bibliographystyle{alpha}
\bibliography{biblio}

\end{document}